\newcommand{\R}{\mathbb{R}}
\renewcommand{\d}{\mathrm{d}}
\newcommand{\E}{\mathcal{E}}
\newcommand{\one}{\mathbbm{1}}
\numberwithin{equation}{section}
\newtheorem{thm}{Theorem}[section]
\newtheorem{lemma}[thm]{Lemma}
\newtheorem{prop}[thm]{Proposition}
\newtheorem{coro}[thm]{Corollary}
\title[Moment propagation of an ionic Vlasov-Poisson system]{Moment propagation of a Vlasov-Poisson system for ion flow in the quasi-neutral regime}
\author[Z. Zhang]{Zhiwen Zhang}
\address[Z. Zhang]{Department of Mathematics, The Chinese University of Hong Kong,
	Shatin, Hong Kong SAR, P.R.~China}
\email{zwzhang@math.cuhk.edu.hk}
\date{\today}
\subjclass[2020]{35Q83}
\keywords{Vlasov-Poisson system, classical solutions, moment propagation}
\begin{document}
	\thispagestyle{empty}
	
	\begin{abstract}
		In light of recent work in the global well-posedness of solutions for an ionic Vlasov-Poisson system, as demonstrated by Griffin-Pickering and Iacobelli \cite{GSIV}, the current work focuses on the moment propagation of the corresponding system in quasi-neutral regime. Such moment propagation result relies on an estimate of $Q_*(t)=|V(t;0,x,v)-V(0;0,x,v)|$, where $V(s;t,x,v)$ represents the solution of the characteristic ordinary differential equation associated with the Vlasov-Poisson system. The main goal of this work is to serve the future research on quasi-neutral limit for ionic Vlasov-Poisson system in $\R^3$. 
	\end{abstract}
	
	\maketitle
	
	\section{Introduction}
	
	The Vlasov-Poisson equation is a fundamental equation in plasma physics and astrophysics that describes the behavior of a collision-less plasma in the presence of an electromagnetic field. In this article, we focus on studying the moment propagation of a specific variant known as the Vlasov-Poisson system for ion flow, alternatively referred to as the \emph{Vlasov-Poisson system with massless electrons (VPME)} \cite{GSIV}. The system is described by the following equations:
	
	\begin{equation}
		\label{vpme}
		(VPME)=\left\{
		\begin{aligned}
			& \partial_t f+v\cdot\nabla_x f+E\cdot \nabla_vf =0,\\
			& E=-\nabla U,\\
			& \varepsilon^2\Delta U=g(x) e^{U}-\rho=ge^U-\int_{\R^3}f(t,x,v)\d v,\\
			& f_{t=0}=f_{0}\geqslant 0.
		\end{aligned}
		\right.
	\end{equation}
	Here, the variable $t\geqslant 0$ represents time, while the phase-space is defined as $\R^3\times\R^3$, encompassing three-dimensional position coordinates $x$ and velocity components $v$. The distribution function of ions, denoted by $f=f(t,x,v)$, captures the behaviour of ions in the plasma, taking into account their positions and velocities.
	
	The function $g$, introduced and discussed in detail in \cite{GSIV}, is associated with the external potential experienced by the electrons in the system. This potential influences the behaviour of the ions through the electric field, which, in turn, affects the distribution function $f$. The function $g$ is assumed to satisfy certain mathematical properties, namely $g\in L^1\cap L^\infty(\R^3)$, ensuring its integrability and boundedness.
	
	The parameter $\varepsilon$ in the system equations corresponds to the Debye length of the plasma. The Debye length characterizes the distance scale over which charged particles interact with each other within the plasma. It plays a crucial role in determining the level of charge screening and the extent of deviations from quasi-neutrality in the plasma. The specific value of the Debye length depends on the particular characteristics and properties of the plasma under consideration.

	By studying the Vlasov-Poisson system with these considerations, this research aims to provide insights into the dynamics and behaviour of ions in a plasma, considering the influence of the Debye length and the external potential on the distribution function. The results obtained in this investigation contribute to a deeper understanding of plasma physics and its applications in various fields, including fusion energy research, astrophysics, and plasma processing technologies.
	
	The existence and uniqueness of solutions of Vlasov-Poisson system in electron case is first given by Schaeffer \cite{C2}, also by Horst \cite{C3}, and Lions and Perthame \cite{C1}. Global well-posedness of VPME was first proved by Han-Kwan and Iacobelli \cite{V1} in one-dimensional case. Later, Griffin-Pickering and Iacobelli \cite{GSIV} proved the well-posedness of VPME in three-dimensional space. The moment propagation of VPME in torus form is given by Griffin-Pickering and Iacobelli \cite{quasi}. This result helped the two authors solve the quasineutral limit problem of VPME in torus form.

	We begin by introducing the energy functional associated with the system (\ref{vpme})
	\begin{equation}
		\label{energy}
		\E[f]=\int_{\R^3}\int_{\R^3}|v|^2f \d x\d v+\varepsilon^2\int_{\R^3}|E|^2\d x+2\int_{\R^3}(U-1)ge^U\d x.
	\end{equation}
	Furthermore, in accordance with the well-posedness result presented in \cite{GSIV}, we establish the following proposition.
	
	\begin{prop}
		\label{wellposedness}
		Let $f_0 \in L^1 \cap L^{\infty}\left(\mathbb{R}^3 \times \mathbb{R}^3\right)$ be a probability density satisfying
		\begin{equation}
			\label{H1}
			\int_{\mathbb{R}^3 \times \mathbb{R}^3}|v|^{m_0} f_0(x, v) d x d v<+\infty \quad \text { for some } m_0>6
		\end{equation}
		
		\begin{equation}
			\label{H2}
			\quad f_0(x, v) \leq \frac{C}{(1+|v|)^r} \quad \text { for some } r>3 .
		\end{equation}
		Assume that $g \in L^1 \cap L^{\infty}\left(\mathbb{R}^3\right)$, with $g \geq 0$ satisfying $\int_{\mathbb{R}^3} g\d x=1$, and that $\mathcal{E}\left[f_0\right] \leq C$. Then there exists a unique global solution $f \in L^{\infty}\left([0, T] ; L^1 \cap L^{\infty}\left(\mathbb{R}^3 \times \mathbb{R}^3\right)\right)$ of (VPME) with initial datum $f_0$ such that $\rho_f \in L^{\infty}\left([0, T] ; L^{\infty}\left(\mathbb{R}^3\right)\right)$, for any finite $T>0$.
	\end{prop}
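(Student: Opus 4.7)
The plan is to derive this proposition essentially as a restatement of the global well-posedness result of Griffin-Pickering and Iacobelli \cite{GSIV}, so the task is mainly to verify that the hypotheses here align with theirs and to outline how the standard Lions--Perthame moment-propagation machinery is adapted to the semilinear Poisson--Boltzmann equation.

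First, I would split the potential as $U=\bar U+\hat U$, where $\bar U$ satisfies the linear Poisson equation $\varepsilon^{2}\Delta\bar U=-\rho$ and the correction $\hat U$ solves $\varepsilon^{2}\Delta\hat U=g\,e^{\bar U+\hat U}$. The key observation from \cite{GSIV} is that $\hat U$ is strictly smoother than $\bar U$: variational arguments combined with $g\in L^{1}\cap L^{\infty}$, $g\ge 0$, $\int g\,\d x=1$ yield quantitative control on $\hat U$ and $\nabla\hat U$ in $L^{\infty}$, depending only on $\|\rho\|_{L^{\infty}}$ and $\E[f_{0}]$. This essentially reduces the analysis of $E=-\nabla U$ to handling the singular part $\bar U$, as in the classical electron case.

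Next, local existence is obtained by a standard contraction argument on the characteristic flow $V(s;t,x,v)$ associated with $(v,E)$: given an a priori $L^{\infty}$ bound on $\rho$, the regularity of $E$ inherited from the splitting above makes the flow well-defined and propagates the pointwise decay (\ref{H2}). For the global extension I would invoke the continuation criterion that a local solution persists as long as $\|\rho_{f}\|_{L^{\infty}}$ stays finite, and then run the Lions--Perthame bootstrap: the energy $\E[f]$ is conserved (here $2\int(U-1)ge^{U}\,\d x$ plays the role of a Boltzmann-type entropic contribution), giving control of the $|v|^{2}$ moment, while (\ref{H1}) permits iterating moment bounds up the scale. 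A cutoff decomposition of $\rho$ into low- and high-velocity parts, together with the pointwise bound (\ref{H2}) transported along characteristics, then converts moment control into the required $L^{\infty}$ estimate on $\rho$, closing the continuation argument.

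The main obstacle, and the technical heart of \cite{GSIV}, is making this scheme work in the presence of the nonlinear term $g\,e^{U}$: the exponential in $U$ could a priori amplify local concentrations, so the delicate step is the quantitative $L^{\infty}$ and Lipschitz control of $\hat U$ in terms of $\|\rho\|_{L^{\infty}}$ alone. Once those estimates are secured, the Lions--Perthame moment-propagation scheme applies essentially verbatim, and uniqueness follows from a Gronwall estimate on the difference of two characteristic flows using the Lipschitz regularity of $E$ inherited from the persistent $L^{\infty}$ control of $\rho$ together with the smoothness of $\hat U$.
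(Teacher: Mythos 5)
The paper does not prove this proposition at all; it is stated as an import from Griffin-Pickering--Iacobelli \cite{GSIV}, and your sketch is a faithful outline of that reference's argument, built on the same decomposition $U=\bar U+\hat U$ that the paper itself sets up in Section 2.2. Since you and the paper both ultimately rest on \cite{GSIV}, the approach is the same; the only imprecision worth flagging is that the $L^\infty$ and Lipschitz control on $\hat U$ in \cite{GSIV} is obtained from $L^{5/3}\cap L^{1}$ bounds on $\rho$ (via the conserved energy and interpolation, as in Lemma \ref{geu}), not from $\|\rho\|_{L^\infty}$ alone, and the uniqueness step is closer to a Loeper-style Wasserstein/Osgood argument than a plain Gronwall estimate since $E$ is only log-Lipschitz for $\rho\in L^\infty$.
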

	
	Based on the established existence and uniqueness of the solution, we proceed to investigate the moment propagation problem in this paper.
	
	\subsection{Characteristic functions}
	
	Let us define the characteristic ordinary differential equations (ODEs) of the VPME system as follows
	
	\begin{align*}
		\frac{\d}{\d s}X\left(s;t,x,v\right) &= V\left(s;t,x,v\right), \quad X\left(t;t,x,v\right)=x;\\
		\frac{\d}{\d s}V\left(s;t,x,v\right) &= E\left(s;\left(X\left(s;t,x,v\right)\right)\right), \quad V\left(t;t,x,v\right)=v.
	\end{align*}
	
	It is evident that the distribution function can be expressed as
	\[f\left(t,x,v\right)=f\left(s,X\left(s;t,x,v\right),V\left(s;t,x,v\right)\right).\]
	Next, we introduce the quantity
	\begin{equation}
		\label{Qtd}
		Q\left(t,\delta\right)=\sup_{\left(x,v\right)\in\R^3_x\times\R^3_v} \int_{t-\delta}^t|E\left(s;0,x,v\right)|ds.
	\end{equation}
	We utilize $Q\left(t,t\right)$ to estimate the supremum of $\rho(t)$ and the velocity moments of the solution, which is defined by
	\begin{equation}
		\label{mkdef}
		M_k(t)=\sup_{s\in [0,t]}\int_{\R^3}\int_{\R^3}|v|^k f(s,x,v)\d x\d v.
	\end{equation}
	
	\subsection{Main Result}
	
	Now we introduce the main theorem
	
	\begin{thm}
		\label{mainthm}
		Let $f(t,x,v)\geqslant 0$ denotes a solution to the system \eqref{vpme} with initial data $f_0$. $f_0\in L^1\cap L^{\infty}(\R^3\times \R^3)$ satisfies
		\begin{equation}
			\label{Hypo}
			\int_{\R^3\times \R^3} |v|^{m_1} f_0(x, v) d x d v<+\infty,
		\end{equation} 
		where $m_1>2$ be a constant. Assume that $g \in L^1 \cap L^{\infty}\left(\mathbb{R}^3\right)$, with $g \geqslant 0$ satisfying $\int_{\mathbb{R}^3} g\d x=1$, and that $\mathcal{E}\left[f_0\right] \leqslant C$.
		
		Then for any positive $\omega<1$, it holds that the propagation of $Q(t,t)$ can be bounded by an exponential factor in terms of the Debye length and a polynomial factor in terms of the time parameter
		
		\begin{equation}
			\label{main}
			Q(t,t)\leqslant Ce^ {c\varepsilon^{-2}}\left[T^{\frac{1}{2}}+T^{1+\omega}\right],
		\end{equation}
		where $t\in[0,T]$, $C$ and $c$ only depend on $\E\left[f_0\right],\left\|f_0\right\|_{L^1},\left\|f_0\right\|_{L^\infty},M_{m_1}(0)$. 
		
		Furthermore, if $f_0$ satisfies \eqref{H2} additionally, the density of the system can be controlled by $Q(t,t)^3$, leading to the estimate,
		\begin{equation}
			\label{density}
			||\rho(t)||_{L^\infty(\R^3)}\leqslant C\left(1+Q(t,t)^3\right)
		\end{equation}
		for any $t>0$.
	\end{thm}
	
	\begin{coro}
		\label{rem}
		For any $k>2$ and $t\in [0,T]$, $M_k$ is defined in (\ref{mkdef}), then we have
		\begin{equation}
			\label{mk}
			M_k(t)\leqslant C\cdot 2^k\left(1+Q(t,t)^k\right),
		\end{equation}
		so under the assumptions of Theorem \ref{mainthm}, it holds that
		\begin{equation}
			\label{mkt}
			M_k(t)\leqslant C_k\cdot 2^k\left(1+e^{c\varepsilon^{-2}}t^{1+\omega}\right)^k.
		\end{equation}
	\end{coro}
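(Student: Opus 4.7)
The plan is to exploit the fact that the VPME characteristic flow is measure-preserving (since the phase-space vector field $(v,E(s,x))$ is divergence-free in $(x,v)$), together with the fact that $f$ is constant along characteristics. This converts the moment $M_k(t)$ into an integral of $|V(s;0,x,v)|^k$ against the \emph{initial} datum $f_0$, after which the quantity $Q(t,t)$ controls the excursion of $V$ from its initial value $v$.

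First I would fix $s\in[0,t]$ and apply the change of variables $(x,v)\mapsto(X(s;0,x,v),V(s;0,x,v))$, whose Jacobian equals one, to obtain
\begin{equation*}
\int_{\R^3}\int_{\R^3}|v|^k f(s,x,v)\,\d x\,\d v=\int_{\R^3}\int_{\R^3}|V(s;0,x,v)|^k f_0(x,v)\,\d x\,\d v.
\end{equation*}
Next, integrating the characteristic ODE for $V$ gives $V(s;0,x,v)=v+\int_0^s E(\tau;X(\tau;0,x,v))\,\d\tau$, and the definition \eqref{Qtd} yields the pointwise bound $|V(s;0,x,v)|\leqslant|v|+Q(s,s)\leqslant|v|+Q(t,t)$ for $s\in[0,t]$, where monotonicity of $Q(\cdot,\cdot)$ in its first argument is used.

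Then I would apply the elementary convexity bound $(a+b)^k\leqslant 2^{k-1}(a^k+b^k)\leqslant 2^k(a^k+b^k)$ for $k\geqslant 1$, giving
\begin{equation*}
\int_{\R^3}\int_{\R^3}|v|^k f(s,x,v)\,\d x\,\d v\leqslant 2^k\!\left(M_k(0)+Q(t,t)^k\|f_0\|_{L^1}\right).
\end{equation*}
Since $f_0$ is a probability density and the hypotheses \eqref{H1}--\eqref{H2} together with $m_0>6$ ensure that $M_k(0)$ is finite (and can be absorbed into $C$ for the range of $k$ of interest by interpolation between the $L^\infty$ decay of $f_0$ and its $m_0$-th moment bound), taking the supremum over $s\in[0,t]$ yields \eqref{mk}. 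The bound \eqref{mkt} then follows by direct substitution of the estimate \eqref{main} from Theorem \ref{mainthm} into \eqref{mk} and simple algebra with exponents.

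The main conceptual point is the measure-preservation of the characteristic flow, which requires $E$ to be Lipschitz enough in $x$ for classical characteristics to be well-defined; this is already guaranteed by Proposition \ref{wellposedness} and the $L^\infty$ bound on $\rho_f$. Beyond this, the argument is essentially a one-line estimate once $Q(t,t)$ is available, so the only obstacle is cleanly tracking the $k$-dependence in the constants so that the $2^k$ factor (and no worse) appears in front.
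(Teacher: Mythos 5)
Your proof is correct and follows essentially the same route as the paper: the measure-preserving change of variables along characteristics, the pointwise bound $|V(s;0,x,v)|\leqslant|v|+Q(t,t)$ coming from the definition of $Q$, the convexity inequality $(a+b)^k\leqslant 2^k(a^k+b^k)$, and then direct substitution of the $Q(t,t)$ estimate from Theorem \ref{mainthm}. You are somewhat more careful than the paper in spelling out the supremum over $s\in[0,t]$ (using monotonicity of $Q$) and in flagging that $M_k(0)<\infty$ only follows from \eqref{H1} for $k$ in the admissible range, but these refinements do not change the argument.
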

	
	\begin{proof}
		For any $k>2$ and $s\in [0,t]$,
		\begin{align*}
			&\int_{\R^3}\int_{\R^3}|v|^kf(s,x,v)\d x\d v\\
			=&\int_{\R^3}\int_{\R^3}|V(s;0,y,w)|^kf(s,X(s;0,y,w),V(s;0,y,w))\d y\d w\\
			\leqslant &\int_{\R^3}\int_{\R^3}|w+Q(s,s)|^kf_0(y,w)\d y\d w\\
			\leqslant & \int_{\R^3}\int_{\R^3}2^k(|w|^k+|Q(s,s)|^k)f_0(y,w)\d y\d w\\
			=& 2^k\left[M_k(0)+M_0(0)Q(s,s)^k\right],
		\end{align*}
		which proves (\ref{mk}). Then we combine (\ref{main}) and (\ref{mk}), we deduce (\ref{mkt}).
	\end{proof}
	
	In previous work, Pallard \cite{pallard} established the inequality $Q(t,t)\leqslant C_1(T^{1/2}+T^{7/5})$ for the electronic Vlasov-Poisson equation in 2012. Notably, the time parameter in (\ref{main}) is similar to Pallard's result. Griffin-Pickering and Iacobelli \cite{GSIV} extended these findings to the case of $\varepsilon=1$, $k<m_0$, where $m_0$ satisfies (\ref{H1}). They proved the inequality
	\begin{equation*}
		\int_{\R^3}\int_{\R^3}|v|^kf(t,x,v)\d x\d v\leqslant \exp\left[C\left(1+\log\left(1+M_k(0)\right)\right)\exp\left(Ct\right)\right]
	\end{equation*}
	for any $t>0$, where $M_k(0)$ represents the left-hand side of (\ref{H1}).
	
	In this work, we estimate $Q(t,t)$ for an ionic Vlasov-Poisson system in the quasi-neutral regime. This result has implications for the quasi-neutral limit problem of the ionic Vlasov-Poisson system in the entire space. Griffin-Pickering and Iacobelli \cite{quasi} tackled the quasi-neutral limit problem in a torus and demonstrated that $Q(t,t)\leqslant C\varepsilon^{-2\frac{k-2}{k-3}}(t+1)^{\frac{k-2}{k-3}}$ holds for $k\in (3,\frac{13}{4}]$ in the context of the ionic Vlasov-Poisson system in a torus.
	
	To prove Theorem \ref{mainthm}, we first estimate the electronic field. In this regard, we employ a similar methodology as in \cite{GSIV} and \cite{quasi}. Secondly, we employ a four-step process to establish (\ref{main}). We adopt a classical approach by partitioning $[t-\delta,t]\times\mathbb{R}^3_x\times\mathbb{R}^3_v$ into three sets: the good set, the bad set, and the ugly set. Similar techniques have been employed in \cite{pallard,C1,C2,chen,glassey}.
	
	In this article, the constant $C$ is variable and depends solely on $f_0$ and $g$, unless otherwise stated.
	
	\section{Preliminaries}
	\subsection{The energy}
	
	The energy functional (\ref{energy}) assumes a crucial role in the analysis of Vlasov-Poisson equations. It captures essential aspects of the system's dynamics, stability, and behaviour. By exploring its properties and evolution, we gain valuable insights into the interplay between kinetic and potential energy, conservation laws, and emergent structures. These analytical methods have also found application in related works such as \cite{horst1,horst2,batt,glassey,C1}.

	\begin{lemma}
		\label{ener}
		Assume $\E[f_0]<+\infty$, then for any $t>0$,
		\begin{equation}
			\label{ener1}
			\E\left[f\right](t)= \E\left[f_0\right].
		\end{equation}
		Moreover, there exists a constant $C$ only depending  on $f_0$ and $g$ such that
		\begin{equation}
			\label{ener2}
			\int_{\R^3}\int_{\R^3}|v|^2f\d x\d v \leqslant C,
		\end{equation}
		and if $f_0\in L^\infty(\R^3\times \R^3)$, there exits a constant $C_{\frac{5}{3}}$ only depending on $f_0$ and $g$,such that
		\begin{equation}
			\label{E35}
			||\rho||_{L^{\frac{5}{3}}}\leqslant C_{\frac{5}{3}}.
		\end{equation}
		The energy function $\E[f]$ is defined in \eqref{energy}. 
	\end{lemma}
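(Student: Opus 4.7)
\medskip

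My plan is to treat the three statements in order, since each feeds into the next: energy conservation is the engine, the lower bound $(U-1)e^U \geq -1$ extracts the kinetic bound, and a standard interpolation then yields $L^{5/3}$ control on $\rho$.

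For \eqref{ener1}, I would differentiate $\E[f](t)$ in time. Multiplying the Vlasov equation by $|v|^2/2$ and integrating in $x,v$, the transport term vanishes after integration by parts in $x$, while the force term gives $-\int\int v\cdot E f\,dx\,dv = -\int E\cdot j\,dx$, so $\frac{d}{dt}\int\int\tfrac{|v|^2}{2}f\,dx\,dv = \int E\cdot j\,dx$. Using $E=-\nabla U$ and the continuity equation $\partial_t\rho + \nabla_x\cdot j = 0$, this becomes $-\int U\,\partial_t\rho\,dx$. The crucial step is to rewrite $\partial_t\rho$ via the Poisson relation $\rho = g e^U - \varepsilon^2 \Delta U$: this produces $\int U g e^U \partial_t U\,dx + \varepsilon^2 \int \nabla U\cdot\nabla\partial_t U\,dx$, and a short computation identifies these as exact time derivatives, namely $\frac{d}{dt}\int (U-1)ge^U\,dx$ (since $\partial_t[(U-1)ge^U] = U g e^U \partial_t U$) and $\tfrac{\varepsilon^2}{2}\frac{d}{dt}\int |E|^2\,dx$ respectively. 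Combining these shows that $\E[f](t)$ has vanishing time derivative, hence $\E[f](t)=\E[f_0]$.

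For \eqref{ener2}, I would exploit the convexity bound $(U-1)e^U \geq -1$ for all $U\in\R$, which follows from $\frac{d}{dU}[(U-1)e^U] = U e^U$ vanishing only at $U=0$ with value $-1$. Since $g\geq 0$ with $\int g\,dx = 1$, this gives $2\int (U-1)ge^U\,dx \geq -2$. Combined with $\varepsilon^2\int |E|^2\,dx \geq 0$ and energy conservation, we obtain
\begin{equation*}
\int_{\R^3}\int_{\R^3}|v|^2 f\,dx\,dv \leq \E[f_0] + 2,
\end{equation*}
which is the desired bound $C$ depending only on $f_0$ and $g$.

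For \eqref{E35}, I would use the classical interpolation splitting: for any $R>0$ and a.e.\ $x$,
\begin{equation*}
\rho(t,x) = \int_{|v|\leq R} f\,dv + \int_{|v|>R} f\,dv \leq \tfrac{4\pi}{3}R^{3}\|f\|_{L^\infty} + R^{-2}\int |v|^2 f\,dv,
\end{equation*}
and optimising in $R$ by choosing $R^{5} \sim \|f\|_{L^\infty}^{-1}\int |v|^2 f\,dv$ yields the pointwise estimate $\rho(t,x) \leq C\|f\|_{L^\infty}^{2/5}\bigl(\int |v|^2 f\,dv\bigr)^{3/5}$. Raising to the $5/3$ power and integrating in $x$ converts the $v$-integral into $\int\int |v|^2 f\,dx\,dv$, which is controlled by \eqref{ener2}, and since $\|f\|_{L^\infty(\R^3\times\R^3)}$ is propagated from $\|f_0\|_{L^\infty}$ along characteristics (by Liouville's theorem for the measure-preserving flow), the resulting $L^{5/3}$ bound depends only on $f_0$ and $g$. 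The main technical issue I expect is the rigorous justification of the formal integration-by-parts arguments in step (i): one needs enough regularity and decay of $U$ and $E$ to legitimise differentiation under the integral and the vanishing of boundary terms, which in turn relies on the regularity provided by Proposition~\ref{wellposedness} together with the quasi-neutral Poisson equation, most naturally handled by first proving the identity for smooth approximations and then passing to the limit.
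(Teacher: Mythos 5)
Your proof takes essentially the same route as the paper: multiply the Vlasov equation by $|v|^2$, integrate by parts, use the continuity equation and the quasi-neutral Poisson relation to identify exact time derivatives and conclude $\frac{d}{dt}\E[f]=0$ (the paper runs the computation starting from $\partial_t\int|E|^2$ rather than from $-\int U\,\partial_t\rho$, but it is the same chain of identities); then $(U-1)e^U\geq -1$ with $\int g=1$ gives the kinetic bound, and the $|v|\leq R$ versus $|v|>R$ interpolation gives the $L^{5/3}$ bound, which the paper simply cites to Lemma~1.2 of \cite{GSIV} but which you correctly work out. One sign slip to fix: substituting $\rho=ge^U-\varepsilon^2\Delta U$ into $-\int U\,\partial_t\rho\,dx$ yields $-\int Uge^U\partial_t U\,dx-\varepsilon^2\int\nabla U\cdot\nabla\partial_t U\,dx=-\frac{d}{dt}\int(U-1)ge^U\,dx-\frac{\varepsilon^2}{2}\frac{d}{dt}\int|E|^2\,dx$, not the expression with positive signs as written, and it is these minus signs that make the three pieces of $\E[f]$ cancel.
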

	
	\begin{proof}
		Firstly, times $|v|^2$ on both sides of the first equation in (\ref{vpme})
		\begin{equation}
			\label{E1}
			|v|^2\partial_tf+|v|^2v\cdot \nabla_xf+|v|^2E\cdot \nabla_vf=0.
		\end{equation}
		Take integration over (\ref{E1})
		\[\partial_t\int_{\R^3}\int_{\R^3}|v|^2f\d x\d v+\int_{\R^3}\int_{\R^3}|v|^2v\cdot \nabla_xf\d x\d v+\int_{\R^3}\int_{\R^3}|v|^2E\cdot\nabla_vf\d x\d v=0.\]
		Since $f\in L^1(\R^3\times\R^3)$, by partly integration
		\begin{equation}
			\label{newE2}
			\partial_t\int_{\R^3}\int_{\R^3}|v|^2f\d x\d v=2\int_{\R^3}\int_{\R^3} vf\cdot E\d x\d v=2\int_{\R^3}j\cdot E\d x\d v,
		\end{equation}
		where
		\[j=\int_{\R^3}vf(t,x,v)\d v.\]
		Note that
		\begin{equation}
			\label{E3}
			\begin{aligned}
				\rho_t+\nabla\cdot j&=\int_{\R^3}\partial_t f+v\cdot\nabla_x f\d v\\
				&=-\int_{\R^3}E\cdot\nabla_vf\d v\\
				&=0.
			\end{aligned}
		\end{equation}
		Now, we use (\ref{newE2}) and(\ref{E3})
		\begin{equation*}
			\begin{aligned}
				\frac{\partial}{\partial t}\int_{\R^3}|E|^2\d x=&2\int_{\R^3}E\cdot E_t\d x=2\int_{\R^3}\nabla U\cdot \nabla U_t\d x\\
				=&-2\int_{\R^3}U\Delta U_t\d x=-\frac{2}{\varepsilon^2}\int_{\R^3}U\cdot\frac{\partial}{\partial t}(ge^U-\rho)\d x\\
				=&-\frac{2}{\varepsilon^2}\frac{\partial}{\partial t}\int_{\R^3}(U-1)g e^U\d x+\frac{2}{\varepsilon^2}\int_{\R^3}U\cdot(-\nabla j)\d x\\
				=&-\frac{2}{\varepsilon^2}\frac{\partial}{\partial t}\int_{\R^3}(U-1)g e^U\d x+\frac{2}{\varepsilon^2}\int_{\R^3}\nabla U\cdot j\d x\\
				=&-\frac{2}{\varepsilon^2}\frac{\partial}{\partial t}\int_{\R^3}(U-1)g e^U\d x-\frac{2}{\varepsilon^2}\int_{\R^3}E\cdot j\d x\\
				=&-\frac{2}{\varepsilon^2}\frac{\partial}{\partial t}\int_{\R^3}(U-1)g e^U\d x-\frac{1}{\varepsilon^2}\frac{\partial}{\partial t}\int_{\R^3}|v|^2f\d x.
			\end{aligned}
		\end{equation*}
		The last equality uses (\ref{newE2}). Hence,
		\[\frac{\partial}{\partial t}\left[\int_{\R^3}\int_{\R^3}|v|^2f \d x\d v+\varepsilon^2\int_{\R^3}|E|^2\d x+2\int_{\R^3}(U-1)ge^U\d x\right]=0,\]
		which means
		\[\frac{\partial}{\partial t}\E\left[f\right]=0.\]
		Hence, we have (\ref{ener1})
		\[\E\left[f\right](t)=\E\left[f_0\right]=C.\]
		Note that for any $U\in\R$, we have $(U-1)e^U\geqslant -1$. So we have
		\begin{equation*}
			\begin{aligned}
				\int_{\R^3}\int_{\R^3}|v|^2 f\d x\d v &\leqslant C-2\int_{\R^3}(U-1)ge^U\d x\\
				&\leqslant C+2\int_{\R^3}g\d x.
			\end{aligned}
		\end{equation*}
		This inequality is \eqref{ener2}. Then by a classical analysis \cite[Chapter 4]{glassey}, \eqref{E35} holds. 
	\end{proof}
	
	\subsection{Electronic Field Estimate}
	
	Let's consider the third equation of (\ref{vpme})
	\begin{equation*}
		\varepsilon^2\Delta U=ge^U-\rho.
	\end{equation*}
	Decompose $U$ into $U=\bar{U}+\hat{U}$, where $\bar{U}$ satisfies
	\begin{equation*}
		-\varepsilon^2\Delta \bar{U}=\rho, \ \ \lim_{|x|\to\infty}\bar{U}(x)=0.
	\end{equation*}
	The remainder $\hat{U}$ satisfies
	\begin{equation*}
		\varepsilon^2\Delta \hat{U} =g e^{\bar{U}+\hat{U}}.
	\end{equation*}
	Correspondingly, $\bar{E}=-\nabla \bar{U}$ and $\hat{E}=-\nabla \hat{U}$.
	
	Now we recall the well-known Hardy-Littlewood-Sobolev inequality \cite[Theorem 4.5.3]{operator},
	\begin{prop}
		\label{convolution}
		Let $k_a(y)=|y|^{-\frac{n}{a}}$, $y\in \R^n$, if $1<a<\infty$ and $1\leqslant p<q\leqslant\infty$ and
		\[\frac{1}{p}+\frac{1}{a}=1+\frac{1}{q},\]
		then
		\[||k_a* u||_{L^q}\leqslant C_{p,a} ||u||_{L^p}.\]
	\end{prop}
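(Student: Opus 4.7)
My plan is to prove this as a Hardy--Littlewood--Sobolev inequality for the Riesz kernel $k_a(y)=|y|^{-n/a}$, combining the classical truncation argument with Marcinkiewicz interpolation. The starting observation is that $k_a$ belongs to weak $L^a(\R^n)$: for every $\lambda>0$ the superlevel set $\{y:|y|^{-n/a}>\lambda\}$ is the ball of radius $\lambda^{-a/n}$, which has Lebesgue measure $c_n\lambda^{-a}$. Hence $\|k_a\|_{L^{a,\infty}}^a=c_n$, a finite dimensional constant that will ultimately govern $C_{p,a}$.

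Next, I would derive a weak-type $(p,q)$ estimate by truncation. For any $R>0$, split $k_a=k_a\mathbf{1}_{|y|\leqslant R}+k_a\mathbf{1}_{|y|>R}=:k_a^{(1)}+k_a^{(2)}$. A direct radial integration gives $\|k_a^{(1)}\|_{L^1}=CR^{n(1-1/a)}$ and $\|k_a^{(2)}\|_{L^{p'}}=CR^{n/p'-n/a}$, where convergence of the second integral requires $p<a'$ -- a condition forced by the hypothesis $\tfrac{1}{p}+\tfrac{1}{a}=1+\tfrac{1}{q}$ together with $q>p$. Young's inequality applied to $k_a^{(1)}*u$ and H\"older's inequality applied to $k_a^{(2)}*u$ yield
\[
\|k_a^{(1)}*u\|_{L^p}\leqslant CR^{n(1-1/a)}\|u\|_{L^p},\qquad \|k_a^{(2)}*u\|_{L^\infty}\leqslant CR^{n/p'-n/a}\|u\|_{L^p}.
\]
For each $\lambda>0$ I would choose $R$ so that the $L^\infty$ bound equals $\lambda/2$; solving and using the scaling relation yields $R\sim(\|u\|_{L^p}/\lambda)^{q/n}$. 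Then $\{|k_a*u|>\lambda\}\subset\{|k_a^{(1)}*u|>\lambda/2\}$, and Chebyshev's inequality delivers
\[
|\{|k_a*u|>\lambda\}|\leqslant C(\|u\|_{L^p}/\lambda)^q,
\]
which is precisely the weak-type $(p,q)$ bound.

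Finally, since this weak estimate holds along the entire HLS scaling line, Marcinkiewicz interpolation between two nearby exponent pairs upgrades it to the desired strong-type inequality $\|k_a*u\|_{L^q}\leqslant C_{p,a}\|u\|_{L^p}$ on the open range $1<p<q<\infty$. The only real obstacle is the endpoint $p=1$: at that value only the weak-type bound $L^1\to L^{a,\infty}$ holds, so I would read the stated range as the open one (as is done in \cite{operator}); every other step is forced by dilation invariance and amounts to careful bookkeeping of the exponents rather than any genuine difficulty.
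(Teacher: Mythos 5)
The paper gives no proof of this proposition: it is quoted from H\"ormander's book \cite{operator}, Chapter 4.5, without any argument, so there is nothing internal to compare your proof against. Your derivation is the standard and correct one---observe $k_a\in L^{a,\infty}(\R^n)$, split $k_a=k_a\one_{|y|\leqslant R}+k_a\one_{|y|>R}$, control the near part by Young's inequality $L^1*L^p\to L^p$ and the far part by H\"older $L^{p'}*L^p\to L^\infty$, optimize over $R$ to get the weak-type $(p,q)$ estimate, and invoke Marcinkiewicz interpolation along the scaling line $1/q=1/p-1/a'$---and the exponent bookkeeping checks out (indeed $n/p'-n/a=-n/q$, and $R^{pn(1-1/a)}\lambda^{-p}\|u\|_{L^p}^p\sim(\|u\|_{L^p}/\lambda)^{q}$). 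You are also right that the strong-type bound fails at $p=1$ and, dually, at $q=\infty$, so the closed range $1\leqslant p<q\leqslant\infty$ printed in the paper must be read as the open range $1<p<q<\infty$, which is what H\"ormander actually states. One remark worth recording: the paper itself later invokes Proposition~\ref{convolution} precisely at the excluded endpoint $q=\infty$ (in Lemma~\ref{geu} to bound $\|\Phi*\rho\|_{L^\infty}$ from $\|\rho\|_{L^{3/2}}$, and in Lemma~\ref{E2bound} and estimate~\eqref{p1c} for $\||x|^{-2}*u\|_{L^\infty}$ from $\|u\|_{L^3}$). Those conclusions are still true, but only because the sources there lie in $L^1\cap L^{5/3}$ or $L^1\cap L^\infty$, so a two-piece kernel split (as in your proof) supplies the $L^\infty$ bound; a blanket appeal to the proposition as stated is not by itself sufficient at that endpoint.
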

	
	By the classical theory of Poisson's equation \cite{evans}, let
	\[\Phi(x)=\frac{1}{4\pi |x|}.\]
	Clearly, $\Phi$ is the fundamental solution of Laplace's equation, so
	\begin{equation}
		\label{U1}
		\bar{U}=\frac{1}{\varepsilon^2}\Phi*\rho,
	\end{equation}
	and
	\begin{equation}
		\label{U2}
		\hat{U}=-\frac{1}{\varepsilon^2}\Phi*(ge^{\bar{U}+\hat{U}}).
	\end{equation}
	
	Then we deduce a lemma
	
	\begin{lemma}
		\label{geu}
		Suppose $\rho\in L^1\cap L^{\frac{5}{3}}$, we have that for all $p\in[1,\infty]$
		\[||ge^U(t)||_{L^p}<C_pe^{c_0\varepsilon^{-2}}.\]
		The constant $C_p$ is only with respect ro $f_0$, $g$ and $p$, and with no respect to $t$,
		\[c_0=C_{\frac{3}{2},3}\times C_{\frac{5}{3}}^{\frac{5}{6}}.\]
		Here, $C_{\frac{3}{2},3}$ is the constant in Proposition \ref{convolution}, $C_{\frac{5}{3}}$ is the constant in Lemma \ref{ener}.
	\end{lemma}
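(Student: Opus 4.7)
The plan is to exploit the splitting $U=\bar U+\hat U$ already set up before the statement, in which $\bar U=\varepsilon^{-2}\Phi\ast\rho$ is the Newtonian potential of $\rho$ and $\hat U=-\varepsilon^{-2}\Phi\ast(ge^U)$ is the correction. Since $\Phi(x)=1/(4\pi|x|)>0$ and $\rho\ge 0$, one gets $\bar U\ge 0$; since additionally $g\ge 0$ and $e^U>0$, the sign of the representation formula for $\hat U$ forces $\hat U\le 0$. Hence pointwise $U\le\bar U$ and consequently $e^U\le e^{\bar U}\le \exp\bigl(\|\bar U\|_{L^\infty}\bigr)$. The problem therefore reduces to an $L^\infty$ bound on $\bar U$ that depends on $\varepsilon$ only through the factor $\varepsilon^{-2}$.

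To bound $\bar U$ in $L^\infty$, I would apply Proposition \ref{convolution} with $n=3$, $a=3$ (so that $k_a(y)=|y|^{-1}$ matches $4\pi\Phi$), and $p=3/2$, which forces $1/q=1/p+1/a-1=0$, i.e.\ $q=\infty$. This yields
\begin{equation*}
\|\bar U\|_{L^\infty}\le \frac{C_{3/2,3}}{\varepsilon^{2}}\,\|\rho\|_{L^{3/2}}.
\end{equation*}
Since $\rho\in L^1\cap L^{5/3}$ with $\|\rho\|_{L^1}=1$ (as $f$ is a probability density), Hölder/log-convexity interpolation gives
\begin{equation*}
\|\rho\|_{L^{3/2}}\le \|\rho\|_{L^1}^{1/6}\,\|\rho\|_{L^{5/3}}^{5/6}\le C_{5/3}^{\,5/6}.
\end{equation*}
The exponent $1/6$ is precisely the one solving $\tfrac{2}{3}=\theta+\tfrac{3}{5}(1-\theta)$, which is why Lemma \ref{ener} and the definition of $c_0$ fit together. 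Combining gives $\|\bar U\|_{L^\infty}\le c_0\varepsilon^{-2}$ with $c_0=C_{3/2,3}\,C_{5/3}^{\,5/6}$ as stated.

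Putting this into the pointwise inequality $e^U\le e^{\bar U}$ and multiplying by $g\ge 0$ yields $0\le g e^U\le g\,e^{c_0\varepsilon^{-2}}$, so for every $p\in[1,\infty]$
\begin{equation*}
\|g e^U(t)\|_{L^p}\le \|g\|_{L^p}\,e^{c_0\varepsilon^{-2}}=:C_p\,e^{c_0\varepsilon^{-2}},
\end{equation*}
using that $g\in L^1\cap L^\infty$ embeds into every $L^p$. The constant $C_p$ depends only on $\|g\|_{L^p}$ and is independent of $t$ because the interpolation bound $C_{5/3}$ on $\|\rho(t)\|_{L^{5/3}}$ from Lemma \ref{ener} is time-uniform, inherited from conservation of energy.

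The only real subtlety is the sign analysis that gives $\hat U\le 0$: it relies on viewing the elliptic problem for $\hat U$ as a convolution against the positive kernel $\Phi$, which requires minimal decay/integrability of $ge^U$. One obtains this a posteriori from the same bound once $\bar U\in L^\infty$ is known, but to make the argument rigorous I would either appeal directly to the representation formula (\ref{U2}) already recorded from \cite{GSIV} or verify by a maximum-principle argument on the equation $\varepsilon^2\Delta\hat U=ge^U\ge 0$ with $\hat U\to 0$ at infinity. Apart from this well-posedness check for the decomposition, every step is a direct quantitative input from the preceding preliminaries.
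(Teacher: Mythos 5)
Your proposal follows the paper's own proof essentially step for step: decompose $U=\bar U+\hat U$, use Proposition \ref{convolution} (with $n=3$, $a=3$, $p=3/2$, $q=\infty$) together with the interpolation $\|\rho\|_{L^{3/2}}\le\|\rho\|_{L^1}^{1/6}\|\rho\|_{L^{5/3}}^{5/6}$ and Lemma \ref{ener} to bound $\|\bar U\|_{L^\infty}\le c_0\varepsilon^{-2}$, use (\ref{U2}) and positivity of $\Phi$ and $ge^U$ to get $\hat U\le 0$, and then conclude $\|ge^U\|_{L^p}\le\|g\|_{L^p}e^{c_0\varepsilon^{-2}}$, interpolating $\|g\|_{L^p}$ between $L^1$ and $L^\infty$. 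This is exactly the paper's argument; the closing remark about justifying $\hat U\le 0$ is a reasonable flag, but the paper resolves it the same way you suggest, by citing the representation formula (\ref{U2}) from \cite{GSIV}.
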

	
	\begin{proof}
		By (\ref{U1}) and Proposition \ref{convolution}, we have
		\begin{equation*}
			\begin{aligned}
				||\bar{U}||_{L^\infty} &\leqslant C_{\frac{3}{2},3}\varepsilon^{-2}||\rho||_{L^{\frac{3}{2}}}\\
				&\leqslant C_{\frac{3}{2},3}\varepsilon^{-2}||\rho||_{L^{\frac{5}{3}}}^{\frac{5}{6}}||\rho||_{L^1}^\frac{1}{6}\\
				&\leqslant c_0\varepsilon^{-2}.
			\end{aligned}
		\end{equation*}
		By (\ref{U2}), Since $\Phi>0$ and $ge^{\bar{U}+\hat{U}}>0$, it follows that $\hat{U}=-\frac{1}{\varepsilon^2}\Phi*(ge^{\bar{U}+\hat{U}})<0$. Then for all $p\in [1,\infty]$
		\begin{equation*}
			\begin{aligned}
				||ge^U||_{L^p} &=||ge^{\bar{U}+\hat{U}}||_{L^p}\\
				&\leqslant||ge^{\bar{U}}||_{L^p}\\
				&\leqslant e^{||\bar{U}||_{L^\infty}}||g||_{L^\infty}^{1-1/p}||g||^{1/p}_{L^1}\\
				&\leqslant C_p e^{c_0\varepsilon^{-2}}.
			\end{aligned}
		\end{equation*}
		
	\end{proof}
	
	\section{Proof of the main theorem}
	Firstly, we prove (\ref{density}). Define
	\begin{equation*}
		Q_*(t)=\sup_{(x,v)\in\R^3_x\times\R^3_v}|V(t;0,x,v)-v|.
	\end{equation*}
	With direct computation, we have $Q_*(t)\leqslant Q(t,t)$. Then we get a bound of the density.
	
	\begin{lemma}
		\label{rhot}
		Let $t\geqslant 0$. Assume that (\ref{H2}) hold and $Q_*(t)$ defined as above is finite. Then
		\begin{equation*}
			||\rho(t)||_{L^\infty(\R^3)}\leqslant C(1+Q_*(t)^3).
		\end{equation*}
	\end{lemma}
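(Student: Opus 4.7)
The plan is to use the characteristic representation $f(t,x,v) = f_0(X(0;t,x,v), V(0;t,x,v))$ to express $\rho(t,x) = \int_{\R^3} f(t,x,v)\,\d v$, and then bound the integral by splitting the velocity domain into a ``small velocity'' region, where the $L^\infty$ bound on $f$ gives a volumetric contribution of order $Q_*(t)^3$, and a ``large velocity'' region, where the decay assumption (\ref{H2}) yields a finite contribution independent of $t$.

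The first step is to translate the bound on the forward flow into a bound on the backward flow. Setting $(y,w) = (X(0;t,x,v), V(0;t,x,v))$, one has $v = V(t;0,y,w)$, so by the very definition of $Q_*$,
\begin{equation*}
|V(0;t,x,v) - v| = |w - V(t;0,y,w)| \leq Q_*(t).
\end{equation*}
In particular, writing $w = V(0;t,x,v)$, we obtain the key inequality $|w| \geq |v| - Q_*(t)$, which lets us transfer decay in the initial velocity to decay in the current velocity whenever $|v|$ is sufficiently large compared to $Q_*(t)$.

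Next I would split the integral at the threshold $|v| = 2Q_*(t)$. On $\{|v| \leq 2Q_*(t)\}$, the classical fact $\|f(t,\cdot,\cdot)\|_{L^\infty} \leq \|f_0\|_{L^\infty}$ (which follows from the transport structure of (\ref{vpme}) and the measure-preserving characteristic flow) gives
\begin{equation*}
\int_{|v|\leq 2Q_*(t)} f(t,x,v)\,\d v \leq \|f_0\|_{L^\infty}\cdot \tfrac{4}{3}\pi(2Q_*(t))^3 \leq C\, Q_*(t)^3.
\end{equation*}
On the complementary region $\{|v| > 2Q_*(t)\}$, the previous step ensures $|w| \geq |v|/2$, hence by (\ref{H2}),
\begin{equation*}
f(t,x,v) = f_0(y,w) \leq \frac{C}{(1+|w|)^r} \leq \frac{C}{(1+|v|/2)^r},
\end{equation*}
and integrating this against $\d v$ over $\R^3$ yields a finite constant because $r > 3$.

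Summing the two contributions gives $\|\rho(t)\|_{L^\infty} \leq C(1 + Q_*(t)^3)$, which is the desired bound. The only subtle point, and hence the main step to handle carefully, is the first one: extracting the backward-in-time velocity displacement estimate $|V(0;t,x,v) - v|\leq Q_*(t)$ from the forward-time supremum in the definition of $Q_*$. Once this time-reversal argument is in place, the rest of the proof is a standard good-set/bad-set style velocity splitting.
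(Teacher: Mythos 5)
Your proof is correct and follows essentially the same route as the paper: the time-reversal identity $|V(0;t,x,v)-v| = |V(t;0,y,w)-w| \leq Q_*(t)$ is the key step in both, and the subsequent velocity splitting (you split at $|v|=2Q_*(t)$ and use the $L^\infty$ transport bound on the small-velocity region; the paper keeps the decay factor everywhere and splits the radial integral at $Q_*(t)$) is a minor bookkeeping difference that yields the same $Q_*(t)^3$ bound.
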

	
	\begin{proof}
		By (\ref{H2}), we have
		\begin{equation*}
			(1+|v|^r)f_0(x,v)\leqslant C,
		\end{equation*}
		where $r>3$. Then we have
		\begin{equation}
			\label{eqn311}
			f(t,x,v)=f(0,X(0;t,x,v),V(0;t,x,v))\leqslant \frac{C}{1+|V(0;t,x,v)|^r}.
		\end{equation}
		Let $y=X(0;t,x,v)$ and $w=V(0;t,x,v)$, then we have $x=X(t;0,y,w)$ and $v=V(t;0,y,w)$. Therefore,
		\begin{equation}
			\label{eqn35}
			\begin{aligned}
				|V(0;t,x,v)|&\geqslant (|v|-|V(0;t,x,v)-v|)_+\\
				&=(|v|-|w-V(t;0,y,w)|)_+\\
				&\geqslant (|v|-\sup_{(z,u)}|V(t;0,z,u)-u|)_+\\
				&=(|v|-Q_*(t))_+.
			\end{aligned}
		\end{equation}
		We then plug (\ref{eqn35}) into (\ref{eqn311})
		\begin{equation}
			\label{eqn312}
			f(t,x,v)\leqslant\frac{C}{1+(|v|-Q_*(t))_+^r}.
		\end{equation}
		Take integration over (\ref{eqn312})
		\begin{align}
			\rho(t,x)&\leqslant \int_{\R^3}\frac{C}{1+(|v|-Q_*(t))_+^r}\d v \notag\\
			&\leqslant C\int_{0}^{\infty}\frac{s^2}{1-(s-Q_*(t))^r}\d s\notag\\
			&= C(\int_0^{Q_*(t)}s^2\d s+\int_{Q_*(t)}^{\infty}\frac{s^2}{1+(s-Q_*(t))^r}\d s)\notag\\
			&=C(I_1+I_2)\label{eqn313}.
		\end{align}
		Here,
		\begin{equation*}
			I_1=\frac{1}{3}Q_*^3(t),
		\end{equation*}
		and
		\begin{equation*}
			\begin{aligned}
				I_2&=\int_0^\infty\frac{(s+Q_*(t))^2}{1+s^r} \d s\\
				&\leqslant\int_{0}^{\infty}\frac{2s^2+2Q_*^2(t)}{1+s^r}\d s\\
				&\leqslant C(1+Q_*^2(t)).
			\end{aligned}
		\end{equation*}
		since $r>3$. Plug $I_1$ and $I_2$ back to (\ref{eqn313}), 
		\begin{equation*}
			\rho(t,x)\leqslant C(1+Q_*(t)^3).
		\end{equation*}
	\end{proof}

	Now we estimate $Q(t,t)$. We use 4 steps.
	
	\subsection{Step 1}
	\begin{lemma}
		\label{E2bound}
		There exists a constant $C$ such that
		\begin{equation*}
			||\hat{E}||_{L^\infty}\leqslant C\varepsilon^{-2}e^{c_0\varepsilon^{-2}}.
		\end{equation*}
	\end{lemma}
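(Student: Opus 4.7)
The plan is to exploit the explicit representation $\hat U=-\varepsilon^{-2}\Phi*(ge^{\bar U+\hat U})=-\varepsilon^{-2}\Phi*(ge^U)$ given in (\ref{U2}), differentiate to obtain $\hat E=-\nabla\hat U=\varepsilon^{-2}\nabla\Phi*(ge^U)$, and then bound this convolution in $L^\infty$ via the standard splitting of the Newtonian kernel into a near-field and a far-field piece. Since $|\nabla\Phi(z)|\le C|z|^{-2}$, one has
\begin{equation*}
|\hat E(x)|\le \frac{C}{\varepsilon^{2}}\int_{\R^{3}}\frac{ge^U(y)}{|x-y|^{2}}\,\d y .
\end{equation*}

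For an arbitrary radius $R>0$ I would split the integral into $\{|x-y|<R\}$ and $\{|x-y|\ge R\}$. On the near piece, bound $ge^U$ by its $L^\infty$ norm and use $\int_{|z|<R}|z|^{-2}\,\d z=2\pi R^{2}\cdot$(angular factor)$=CR$; on the far piece, bound $|x-y|^{-2}$ by $R^{-2}$ and pull out $\|ge^U\|_{L^{1}}$. This gives the interpolation-type inequality
\begin{equation*}
\|\hat E\|_{L^\infty}\le \frac{C}{\varepsilon^{2}}\bigl(R\,\|ge^U\|_{L^\infty}+R^{-2}\,\|ge^U\|_{L^1}\bigr).
\end{equation*}
Optimising in $R$ by choosing $R=\bigl(\|ge^U\|_{L^1}/\|ge^U\|_{L^\infty}\bigr)^{1/3}$ yields
\begin{equation*}
\|\hat E\|_{L^\infty}\le \frac{C}{\varepsilon^{2}}\,\|ge^U\|_{L^1}^{1/3}\,\|ge^U\|_{L^\infty}^{2/3}.
\end{equation*}

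Finally I would invoke Lemma \ref{geu} at the exponents $p=1$ and $p=\infty$; each factor $\|ge^U\|_{L^p}$ contributes a bound $C_p e^{c_0\varepsilon^{-2}}$, and the product of an exponent $1/3$ and $2/3$ gives a single overall factor $e^{c_0\varepsilon^{-2}}$. Combined with the prefactor $\varepsilon^{-2}$ this produces exactly the claimed bound $\|\hat E\|_{L^\infty}\le C\varepsilon^{-2}e^{c_0\varepsilon^{-2}}$.

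The argument is essentially a routine estimate for the gradient of the Newtonian potential, so I do not expect a genuine obstacle; the only mildly delicate point is making sure that the available information on $ge^U$ is in fact $L^1\cap L^\infty$ (which Lemma \ref{geu} provides because $g\in L^1\cap L^\infty$ and $\bar U$ is bounded, while $\hat U\le 0$), so that both endpoints of the interpolation are meaningful and the constants do not implicitly blow up in $\varepsilon$ beyond the claimed $e^{c_0\varepsilon^{-2}}$ factor.
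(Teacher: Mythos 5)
Your proof is correct, but it follows a genuinely different route from the paper. The paper appeals directly to its Proposition~\ref{convolution} (a Hardy–Littlewood–Sobolev–type convolution inequality) with $n=3$, $a=3/2$ so that $k_a=|y|^{-2}$, and the exponents $p=3$, $q=\infty$, yielding $\|\hat E\|_{L^\infty}\le C\varepsilon^{-2}\|ge^U\|_{L^3}$, after which Lemma~\ref{geu} is invoked at $p=3$. You instead perform the elementary near/far splitting of the Newtonian gradient kernel, obtain the interpolation bound $\|\hat E\|_{L^\infty}\le C\varepsilon^{-2}\|ge^U\|_{L^1}^{1/3}\|ge^U\|_{L^\infty}^{2/3}$ after optimizing the cutoff radius, and then invoke Lemma~\ref{geu} at the endpoints $p=1,\infty$. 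Both approaches give the same conclusion because Lemma~\ref{geu} supplies the bound $C_p\,e^{c_0\varepsilon^{-2}}$ uniformly in $p$, and the exponents $1/3+2/3=1$ recombine into a single factor $e^{c_0\varepsilon^{-2}}$. Your route has the advantage of being self-contained and of avoiding the borderline case $q=\infty$ of the HLS inequality, which is exactly the endpoint at which the classical theorem fails (the paper's Proposition~\ref{convolution} is stated to include $q=\infty$, but this is only justified when, as here, two different $L^p$ norms are actually available — which is precisely what your splitting makes explicit). In short, your argument proves a slightly more careful version of what the paper asserts in one line.
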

	
	\begin{proof}
		By (\ref{U2}) and $\hat{E}=-\nabla\hat{U}$, we have
		\begin{equation*}
			\label{E2}
			\hat{E}=-\frac{1}{4\pi \varepsilon^2}\frac{x}{|x|^3}*ge^U.
		\end{equation*}
		We use Proposition \ref{convolution} and Lemma \ref{geu},
		\begin{equation*}
			\begin{aligned}
				||\hat{E}||_{L^{\infty}}&\leqslant \frac{1}{4\pi\varepsilon^2}||\frac{1}{|x|^2}*ge^U||_{L^{\infty}}\\
				&\leqslant C\varepsilon^{-2}||ge^U||_{L^3}\\
				&\leqslant C\varepsilon^{-2} e^{c_0\varepsilon^{-2}}.
			\end{aligned}
		\end{equation*}
	\end{proof}
	
	\begin{prop} \label{Prop: 33}
		For any $0\leqslant\delta\leqslant t\leqslant T$ we have, for any $\gamma\in (0,1)$
		\begin{equation*}
			Q(t,\delta)\leqslant\frac{1}{\gamma} C\left[\delta \varepsilon^{-2}e^{c_0\varepsilon^{-2}}+\delta\varepsilon^{-2}Q(t,\delta)^{4/3}+\delta^{\frac{1}{2}}\varepsilon^{-2}(1+M_{2+\gamma}(T))^{\frac{1}{2}}\right].
		\end{equation*}
		$M_k$ is defined in (\ref{mkdef}) and $Q(t,\delta)$ is defined in (\ref{Qtd}).
	\end{prop}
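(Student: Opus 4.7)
The plan is to split $E=\bar E+\hat E$ as in Section 2 and treat the two parts separately. The smooth part $\hat E$ is uniformly bounded by Lemma \ref{E2bound}, so $\int_{t-\delta}^{t}|\hat E(s,X(s;0,x,v))|\,\d s\leq C\delta\varepsilon^{-2}e^{c_0\varepsilon^{-2}}$, which is exactly the first term on the right-hand side of the claim and requires no further work.

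For the singular part, fix $(x,v)$ and write $X(s)=X(s;0,x,v)$, $V(s)=V(s;0,x,v)$. Since $\bar E=-\nabla\bar U$ with $-\varepsilon^{2}\Delta\bar U=\rho$, the standard Newtonian bound gives
\[
|\bar E(s,X(s))|\leq \frac{C}{\varepsilon^{2}}\iint \frac{f(s,y,w)}{|y-X(s)|^{2}}\,\d y\,\d w.
\]
Following the classical good/bad/ugly decomposition alluded to at the end of the introduction, I partition the integration region $[t-\delta,t]\times\R^{3}_y\times\R^{3}_w$ into three pieces, parametrised by thresholds $R$ and $\eta$ that will be chosen later in terms of $Q(t,\delta)$: the \emph{good} set where $|y-X(s)|\geq R$, the \emph{ugly} set where $|y-X(s)|<R$ and $|w-V(s)|<\eta$, and the \emph{bad} set where $|y-X(s)|<R$ but $|w-V(s)|\geq\eta$.

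On the good set, $\|\rho\|_{L^{1}}=1$ gives a contribution of order $\delta R^{-2}$. On the ugly set, $\|f\|_{L^{\infty}}$ together with the phase-space volume $\eta^{3}$ of the velocity ball and $\int_{|a|<R}|a|^{-2}\,\d a\sim R$ gives $\delta R\eta^{3}$. The bad set is the delicate piece: choosing $\eta\gtrsim Q(t,\delta)$, so that the change of each velocity over $[t-\delta,t]$ (at most $Q(t,\delta)$) cannot kill the relative velocity, guarantees that any particle in the bad set satisfies $|w-V(s)|\geq\eta/2$ throughout the whole interval. I then change variables from time along each characteristic to a scalar projection of the relative displacement, and bound the resulting integral via Hölder's inequality against the velocity moment $M_{2+\gamma}$; the conjugate exponent, tuned by $\gamma\in(0,1)$, is exactly what produces both the factor $\delta^{1/2}\varepsilon^{-2}(1+M_{2+\gamma}(T))^{1/2}$ and the $1/\gamma$ blow-up as $\gamma\to 0$. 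Optimising $R$ and $\eta$ then collapses the good and ugly pieces into $\delta\varepsilon^{-2}Q(t,\delta)^{4/3}$.

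The main technical obstacle is the bad-set estimate: one must track carefully how characteristics disperse when only the magnitude, not the direction, of the relative velocity is controlled, and verify that the $\gamma$-parametrised Hölder step delivers exactly the exponents $4/3$ and $1/2$ claimed, with no hidden logarithms in $\delta$ or $Q(t,\delta)$. Once that piece is done, the remaining estimates (good, ugly, and $\hat E$) are essentially routine, and combine into the stated inequality.
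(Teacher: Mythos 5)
Your opening move (split $E=\bar E+\hat E$, use Lemma \ref{E2bound} to absorb $\hat E$ into the first term) matches the paper exactly, and your overall good/bad/ugly philosophy for the Coulombic part $\bar E$ is in the same family as what the paper does. However, your concrete decomposition is different from the paper's, and as described it does not produce the claimed bound. The paper follows Pallard: the good set is a \emph{velocity} ball ($|v|<P$ or $|v-V_*(s)|<P$ with $P\sim 1+Q$), and the bad/ugly split of the remaining fast particles is by a \emph{velocity-dependent} spatial scale $\Lambda(s,v)=L(1+|v|^{2+\gamma})^{-1}|v-V_*(s)|^{-1}$. You instead use a constant spatial radius $R$ and a constant velocity radius $\eta$. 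This difference is not cosmetic; it creates two gaps.

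First, your good/ugly optimisation cannot give $Q^{4/3}$. With only $\|\rho\|_{L^1}=1$ on the good set you get $\delta R^{-2}$, and the ugly set gives $\delta\|f\|_\infty\eta^3 R$; balancing in $R$ yields $\delta\eta^2\gtrsim\delta Q^2$, not $\delta Q^{4/3}$. The exponent $4/3$ in the paper comes from interpolating the uniform $L^{5/3}$ energy bound on $\rho$ (equation \eqref{E35}) against $\|\rho_G\|_{L^\infty}\lesssim P^3$ to get $\|\rho_G\|_{L^3}\lesssim P^{4/3}$, then applying Proposition \ref{convolution}. Without invoking the $L^{5/3}$ bound somewhere, a power $Q^2$ is the best you can reach, which would change the exponents in Steps 2--4 and in the final statement.

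Second, and more seriously, the time-averaging on your bad set $\{|y-X_*(s)|<R,\ |w-V_*(s)|\geq\eta\}$ does not close. That argument replaces $|X(s)-X_*(s)|$ by $c|v-v_*|\,|s-s_0|$ and integrates in $s$; the resulting $\int |s-s_0|^{-2}\d s$ is divergent unless there is a \emph{lower} bound on the spatial separation. In the paper this lower bound is exactly the defining condition $|x-X_*(s)|>\Lambda(s,v)$ of the ugly set, which caps the integrand by $\Lambda(t,v)^{-2}$ and delivers $\int_{t-\delta}^{t}\leq C(1+|v|^{2+\gamma})/L$ (Lemma \ref{Ugly}); the velocity weight $(1+|v|^{2+\gamma})$ in $\Lambda$ is also what makes $M_{2+\gamma}$ appear once you integrate against $f$. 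Your bad set only imposes an upper bound $|y-X_*(s)|<R$, so the singularity at closest approach is not controlled; moreover, with constant $R,\eta$ there is no mechanism to produce the $M_{2+\gamma}$ weight. Finally, in the paper the $1/\gamma$ factor comes from the velocity integral $\int_{|v|\geq P}|v|^{-3-\gamma}\d v$ in the \emph{bad}-set (spatially close) estimate, not from a Hölder step in the time-averaged piece; your attribution of the $1/\gamma$ to the Hölder step is not where it arises in this scheme.

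To repair this while keeping your framework, you would need to (i) estimate the good set via the $L^{5/3}\cap L^\infty$ interpolation rather than $L^1$ alone, and (ii) make the spatial threshold velocity-dependent so that the time-averaged piece has both a lower bound on distance and the $(1+|v|^{2+\gamma})$ weight — at which point you will essentially have reconstructed the Pallard decomposition that the paper uses.
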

	
	\begin{proof}
		Fix $(x_*,v_*)\in \R^3\times\R^3$, let $X_*(s)=X(s;0,x_*,v_*)$, $V_*(s)=V(s;0,x_*,v_*)$, by Lemma \ref{E2bound},
		\begin{align}
			\int_{t-\delta}^t|E(s,X_*(s))|\d s &=\int_{t-\delta}^t|\hat{E}(s,X_*(s))|\d s+\int_{t-\delta}^t |\bar{E}(s,X_*(s))|\d s\notag\\
			&\leqslant C(\delta \varepsilon^{-2}e^{c_0\varepsilon^{-2}}+\varepsilon^{-2}\int_{t-\delta}^t\int_{\R^3}\frac{1}{|x-X_*(s)|^2}\rho(s,x)\d x\d s).\label{S1}
		\end{align}
		Let
		\begin{equation*}
			\begin{aligned}
				I_*(t,\delta)&=\int_{t-\delta}^t\int_{\R^3}\frac{1}{|x-X_*(s)|^2}\rho(s,x)\d x\d s\\
				&=\int_{t-\delta}^t\int_{\R^3}\int_{\R^3}\frac{1}{|x-X_*(s)|^2}f(s,x,v)\d v\d x\d s.
			\end{aligned}
		\end{equation*}
		We use the standard method to estimate $I_*$, which means to split $[t-\delta,t]\times\R^3\times\R^3$ into three subsets, namely good set $G$, bad set $B$  and ugly set $U$. We use the split given by C. Pallard \cite{pallard}.
		\[G=\{(s,x,v)\in[t-\delta,t]\times\R^3\times\R^3:|v|<P \text{ or } |v-V_*(s)|<P \},\]
		\begin{align*}
			B=\{(s,x,v)\in[t-\delta,t]\times\R^3\times\R^3:|v|\geqslant P \text{ and } |v-V_*(s)|\geqslant P \\
			\text{and } |x-X_*(s)|\leqslant \Lambda(s,v)\},
		\end{align*}
		\begin{align*}
			U=\{(s,x,v)\in[t-\delta,t]\times\R^3\times\R^3:|v|\geqslant P \text{ and } |v-V_*(s)|\geqslant P \\
			\text{and } |x-X_*(s)|> \Lambda(s,v)\}.
		\end{align*}
		Here, $P=1+2^{10}Q(t,\delta)$, $\Lambda(s,v)=L(1+|v|^{2+\gamma})^{-1}|v-V_*(s)|^{-1}$ and $L>0$ is to be determined. Obviously that this split is reasonable because $[t-\delta,t]\times\R^3\times\R^3=G\cup B \cup U$. Naturally, we define $I_*^G,I_*^B,I_*^U$ to be the integrations over $G,B,U$, respectively.
		
		\noindent {\bf Good set}:
		
		By Proposition \ref{convolution}, for any $u\in L^{\frac{5}{3}}\cap L^{\infty}(\R^3)$
		\begin{equation}
			\label{p1c}
			||u*|x|^{-2}||_{L^\infty}\leqslant C||u||_{L^3}\leqslant C||u||_{L^{\frac{5}{3}}}^{5/9}||u||_{L^\infty}^{4/9}.
		\end{equation}
		Let
		\[\rho_G(s,x)=\int_{\{v:|v|<P \text{ and } |v-V_*(s)|<P\}}f(s,x,v) \d v,\]
		then
		\begin{equation*}
			||\rho_G(s,x)||_{L^{\frac{5}{3}}}\leqslant ||\rho(s,x)||_{L^{\frac{5}{3}}}\leqslant C,
		\end{equation*}
		\begin{equation*}
			||\rho_G(s,x)||_{L^\infty} \leqslant C||f(s)||_{L^{\infty}}P^3\leqslant CP^3.
		\end{equation*}
		Now we use (\ref{p1c}) to estimate $I_*^G$
		\begin{equation}
			\label{IG}
			I_*^G=\int_{t-\delta}^t||\rho_G*|x|^{-2}||_{L^{\infty}} \d x\leqslant C\delta P^{4/3}.
		\end{equation}
		
		\noindent {\bf Bad set:}
		
		Firstly,
		\begin{equation*}
			\begin{aligned}
				&\int_{\{x:|x-X_*(s)|\leqslant \Lambda(s,v)\}} \frac{1}{|x-X_*(s)|^{2}}f(s,x,v) \d x\\
				\leqslant &||f||_{L^{\infty}}\int_{\{x:|x-X_*(s)|\leqslant \Lambda(s,v)\}}\frac{1}{|x-X_*(s)|^{2}}\d x\\
				\leqslant & C\Lambda(s,v).
			\end{aligned}
		\end{equation*}
		Next, we have
		\begin{align}
			I_*^B\leqslant & C\delta \int_{\{v:|v|\geqslant P \text{ and } |v-V_*(s)|\geqslant P\}}\Lambda(s,v)\d v\notag\\
			=& C\delta \int_{\{v:|v|\geqslant P \text{ and } |v-V_*(s)|\geqslant P\}}\frac{L}{(1+|v|^{2+\gamma})|v-V_*(s)|}\d v\notag\\
			= & C\delta L \int_{\{v:|v|\geqslant P \text{ and } |v-V_*(s)|\geqslant P,|v|\leqslant |v-V_*(s)|\}}\frac{1}{(1+|v|^{2+\gamma})|v-V_*(s)|}\d v\notag\\
			&+\int_{\{v:|v|\geqslant P \text{ and } |v-V_*(s)|\geqslant P,  |v|> |v-V_*(s)|\}}\frac{1}{(1+|v|^{2+\gamma})|v-V_*(s)|}\d v\notag\\
			\leqslant &C\delta L\int_{\{v:|v|\geqslant P \text{ and } |v-V_*(s)|\geqslant P,|v|\leqslant |v-V_*(s)|\}}\frac{1}{|v|^{3+\gamma}}\d v\notag\\
			&+\int_{\{v:|v|\geqslant P \text{ and } |v-V_*(s)|\geqslant P,  |v|> |v-V_*(s)|\}}\frac{1}{|v-V_*(s)|^{3+\gamma}}\d v\notag\\
			\leqslant & C\delta L\int_{|v|\geqslant P}\frac{1}{|v|^{3+\gamma}}\d v+\int_{|v-V_*(s)|\geqslant P}\frac{1}{|v-V_*(s)|^{3+\gamma}}\d v\notag\\
			\leqslant & \frac{1}{\gamma}C\delta LP^{-\gamma} \notag\\
			\leqslant & \frac{1}{\gamma} C\delta L.\label{IB}
		\end{align}
		The last inequality uses the fact that $P>1$.
		
		\noindent {\bf Ugly set:}
		
		\begin{equation*}
			\begin{aligned}
				I_*^U(t,\delta) =&\int_{t-\delta}^{t}\int_{\R^3}\int_{\R^3}\frac{f(s,x,v)\one_U(s,x,v)}{|x-X_*(s)|^2}\d v\d x \d s\\
				=&\int_{\R^3}\int_{\R^3}\int_{t-\delta}^{t}\frac{\one_U(s,X(s),V(s))}{|X(s)-X_*(s)|^2}\d s f(t,x,v) \d v\d x.
			\end{aligned}
		\end{equation*}
		Here, for simplicity, we use $X(s)$ and $V(s)$ to denote $X(s;t,x,v)$ and $V(s;t,x,v)$.
		
		If for any $s\in[t-\delta,t]$, we have $(s, X(s), V(s))\not \in U$, then $I_*^U=0$.
		
		If not, then there exists $s_1\in[t-\delta,t]$, such that $(s_1,X(s_1),V(s_1))\in U$. By the definition of $U$, $|V(s_1)|\geqslant P$ and $|V(s_1)-V_*(s_1)|>P$. Then we claim that for any $s\in [t-\delta,t]$,
		\begin{equation}
			\label{v1}
			\frac{1}{2}|v|\leqslant |V(s)|\leqslant 2|v|,
		\end{equation}
		\begin{equation}
			\label{v2}
			\frac{1}{2} |v-v_*|\leqslant |V(s)-V_*(s)|\leqslant |v-v_*|.
		\end{equation}
		Let's prove (\ref{v1}) first. $|V(s_1)|\leqslant |V(t)|+|V(t)-V(s_1)|\leqslant |v|+Q(t,\delta)\leqslant|v|+\frac{1}{2}|V(s_1)|$, so $|V(s_1)|\leqslant 2|v|$. For any $s\in[t-\delta,t]$, 
		\[|V(s)|\leqslant|v|+|v-V(s)|\leqslant |v|+\frac{1}{2}P\leqslant|v|+\frac{1}{2}|V(s_1)|\leqslant 2|v|,\]
		\[|V(s)|\geqslant|v|-|v-V(s)|\geqslant |v|-\frac{1}{4}P\geqslant|v|-\frac{1}{4}|V(s_1)|\geqslant \frac{1}{2}|v|.\]
		Secondly, we prove (\ref{v2}) similarly. $|V(s_1)-V_*(s_1)|\leqslant |v-v_*|+|V(s)-v|+|V_*(s)-v_*|\leqslant |v-v_*|+\frac{1}{2}P\leqslant |v-v_*|+\frac{1}{2}|V(s_1)-V_*(s_1)|$, so $|V(s_1)-V_*(s_1)|\leqslant 2|v-v_*|$. For any $s\in [t-\delta,t]$, 
		\begin{equation*}
			\begin{aligned}
				|V(s)-V_*(s)|&\leqslant |v-v_*|+|v-V(s)|+|v_*-V_*(s)|\leqslant|v-v_*|+\frac{1}{2}P\\
				&\leqslant |v-v_*|+\frac{1}{2}|V(s_1)-V_*(s_1)|\leqslant 2|v-v_*|,
			\end{aligned}
		\end{equation*}
		\begin{equation*}
			\begin{aligned}
				|V(s)-V_*(s)|&\geqslant |v-v_*|-|v-V(s)|-|v_*-V_*(s)|\geqslant|v-v_*|-\frac{1}{4}P\\
				&\geqslant |v-v_*|-\frac{1}{4}|V(s_1)-V_*(s_1)|\geqslant \frac{1}{2}|v-v_*|.
			\end{aligned}
		\end{equation*}
		After proving the claims (\ref{v1},\ref{v2}), we deduce the following lemma:
		\begin{lemma}
			\label{Ugly}
			For any $(x,v)\in\R^3\times\R^3$,
			\begin{equation}
				\label{Ugly0}
				\int_{t-\delta}^t\frac{\one_U(s,X(s),V(s))}{|X(s)-X_*(s)|^2}\d s\leqslant C\frac{1+|v|^{2+\gamma}}{L}.
			\end{equation}
		\end{lemma}
		\begin{proof}
			By (\ref{v1}) and (\ref{v2}), $\Lambda(s,V(s))=L(1+|V(s)|^{2+\gamma})^{-1}|V(s)-V_*(s)|^{-1}\geqslant 2^{-3-\gamma}L(1+|v|^{2+\gamma})^{-1}|v-v_*|^{-1}=2^{-3-\gamma}\Lambda(t,v)$. Hence,
			\begin{equation}
				\label{Ugly1}
				\frac{\one_U(s,X(s),V(s))}{|X(s)-X_*(s)|^2}\leqslant \frac{\one_{\R^3\backslash B(X_*(s),2^{-3-\gamma}\Lambda(t,v))}(X(s))}{|X(s)-X_*(s)|^2}\leqslant h(|Y(s)|),
			\end{equation}
			where $Y(s)=X(s)-X_*(s)$, $h(u)=\min\{|u|^{-2},4^{3+\gamma}\Lambda(t,v)^{-2}\}$. Since $h$ is a non-increasing, we only need to find a lower bound of $|Y(s)|$. For any $s_0\in[t-\delta,t]$, because $Y(s)+\int_{s_0}^s(s-u)Y''(u)\d u=Y(s_0)+(s-s_0)Y'(s_0)$, we have
			\begin{align}
				|Y(s)|\geqslant & |Y(s_0)+(s-s_0)Y'(s_0)|-|\int_{s_0}^{s}(s-u)Y''(u)\d u|\notag\\
				\geqslant & |Y(s_0)+(s-s_0)Y'(s_0)|-(s-s_0)\int_{s_0}^s|E(u,X(u))|+|E(u,X_*(u))|\d u\notag\\
				\geqslant & |Y(s_0)+(s-s_0)Y'(s_0)|-(s-s_0)\cdot 2Q(t,\delta).\label{Ys}
			\end{align}
			Set $s_0\in[t-\delta,t]$ such that $s=s_0$ minimize $|Y(s)|^2$, then $(s-s_0)Y(s_0)\cdot Y'(s_0)\geqslant 0$, so 
			\begin{equation}
				\label{h1}
				|Y(s_0)+(s-s_0)Y'(s_0)|^2\geqslant |Y'(s_0)|^2|s-s_0|^2.
			\end{equation} 
			Since
			\begin{equation}
				\label{h2}
				|Y'(s_0)|=|V(s)-V_*(s)|\geqslant \frac{1}{2}|v-v_*|,
			\end{equation}
			and
			\begin{equation}
				\label{h3}
				2Q(t,\delta)\leqslant 2^{-9} P\leqslant 2^{-9} |V(s)-V_*(s)|\leqslant 2^{-8}|v-v_*|.
			\end{equation}
			Take (\ref{h1}),(\ref{h2}),(\ref{h3}) back to (\ref{Ys}),
			\begin{equation*}
				\begin{aligned}
					|Y(s)|\geqslant &|Y(s_0)+(s-s_0)Y'(s_0)|-2(s-s_0)Q(t,\delta)\\
					\geqslant &|Y'(s_0)||s-s_0|-2|s-s_0|Q(t,\delta)\\
					\geqslant &(|Y'(s_0)|-2Q(t,\delta))|s-s_0|\\
					\geqslant &\frac{1}{3}|v-v_*||s-s_0|.
				\end{aligned}
			\end{equation*}
			Now we estimate the integration of $h(|Y(s)|)$.
			\begin{figure}[H]
				\centering
				\includegraphics[scale=0.2]{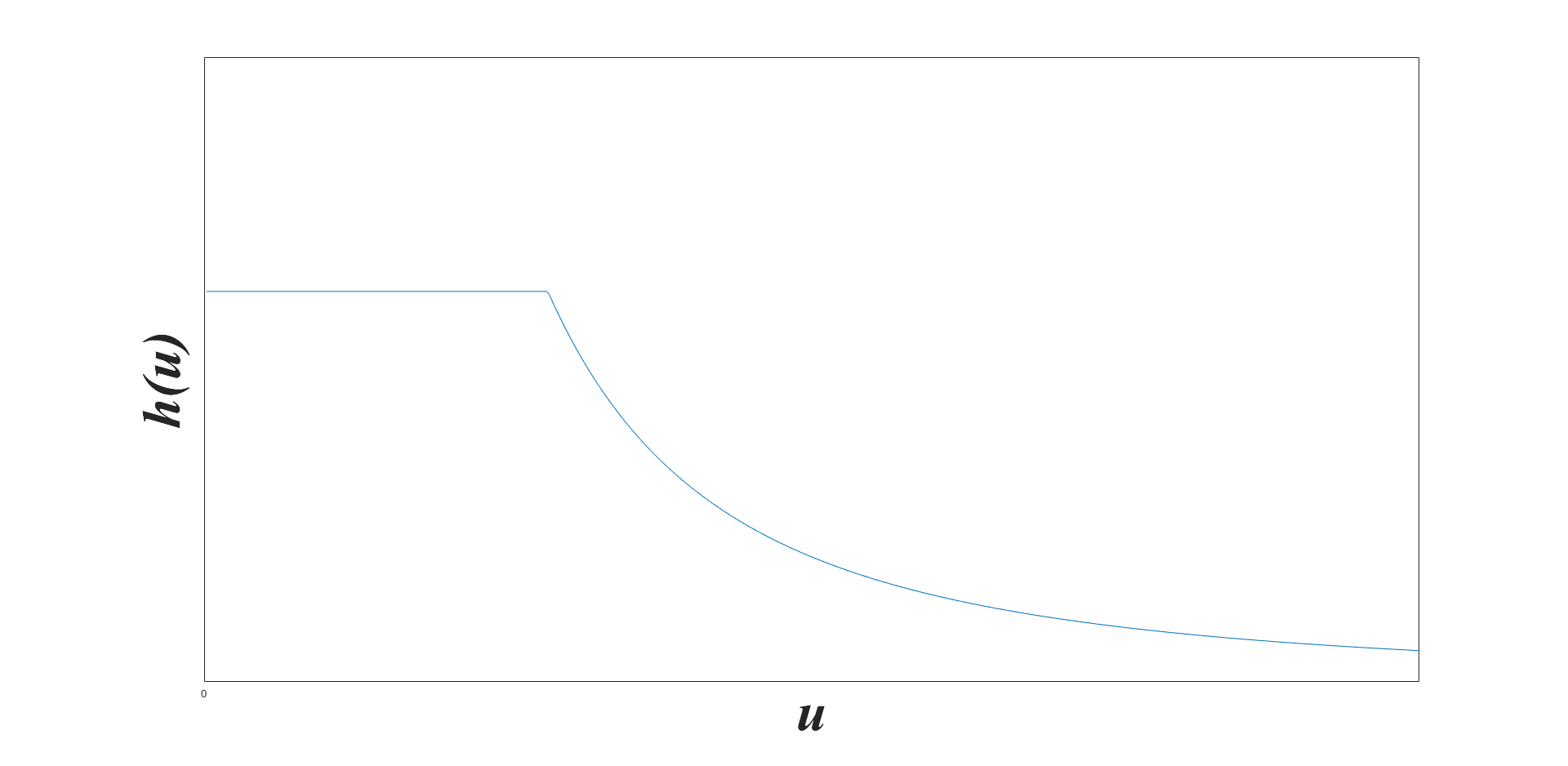}
				\caption{A brief graph of $h$}
			\end{figure}
			\begin{align}
				\int_{t-\delta}^t h(|Y(s)|)\d s\leqslant & \int_{t-\delta}^t h(\frac{|v-v_*||s-s_0|}{3})\d s\notag\\
				\leqslant &\int_{-\infty}^{\infty} h(\frac{|v-v_*||s-s_0|}{3})\d s\notag\\
				\leqslant &2\int_0^{\infty} h(\frac{|v-v_*|s}{3})\d s\notag\\
				\leqslant &\frac{6}{|v-v_*|}\int_0^{\infty}h(\eta)\d \eta\notag\\
				\leqslant &\frac{6}{|v-v_*|}\cdot C \cdot2^{3+\gamma}\Lambda (t,v)^{-1}\notag\\
				\leqslant &C\frac{1+|v|^{2+\gamma}}{L}.\label{Ugly2}
			\end{align}
			Combining equations (\ref{Ugly1}) and (\ref{Ugly2}), 
			\begin{align*}
				\int_{t-\delta}^{t} \frac{\one_U(s,X(s),V(s))}{|X(s)-X_*(s)|^2}\d s \leqslant& \int_{t-\delta}^t h(|Y(s)|)\d s\\
				\leqslant &C\frac{1+|v|^{2+\gamma}}{L},
			\end{align*}
			we arrive at (\ref{Ugly0}). Therefore, the proof of Lemma \ref{Ugly} is now concluded.
		\end{proof}
		By Lemma \ref{Ugly}, we have $I_*^U(t,\delta)\leqslant CL^{-1}(1+M_{2+\gamma}(T))$. Combining (\ref{S1}) and the estimation of $I_*^G,I_*^B,I_*^U$ (\ref{IG},\ref{IB}), we have got that
		\[Q(t,\delta)\leqslant\frac{1}{\gamma}C\left[\delta\varepsilon^{-2}e^{c_0\varepsilon^{-2}}+\delta\varepsilon^{-2}\delta L+\varepsilon^{-2}L^{-1}(1+M_{2+\gamma}(T))\right].\]
		Take $L=\delta^{-\frac{1}{2}}(1+M_{2+\gamma}(T))^{\frac{1}{2}}$,
		we get the theorem
		\[Q(t,\delta)\leqslant\frac{1}{\gamma} C\left[\delta \varepsilon^{-2}e^{c_0\varepsilon^{-2}}+\delta\varepsilon^{-2}Q(t,\delta)^{4/3}+\delta^{\frac{1}{2}}\varepsilon^{-2}(1+M_{2+\gamma}(T))^{\frac{1}{2}}\right].\]
		
	\end{proof}
	
	\subsection{Step 2}
	From this section, constants $C$ may depend on $\gamma$. $\gamma$ will be determined in section 3.4, so the constant in final result does not depend on $\gamma$.
	
	\begin{prop}
		For $t\in [0,T]$, $\exists c_*>0$, let
		\begin{equation*}
			\delta_*=c_*\varepsilon^{\frac{16}{7}}e^{-\frac{8}{7}c_0\varepsilon^{-2}}(1+M_{2+\gamma}(T))^{-\frac{1}{7}}.
		\end{equation*}
		Then for any $\delta\leqslant\delta_*$
		\begin{equation*}
			Q(t,\delta)\leqslant C\delta^{\frac{1}{2}}\varepsilon^{-2}e^{c_0\varepsilon^{-2}}\left[\delta^\frac{1}{2}+(1+M_{2+\gamma}(T))^{\frac{1}{2}}\right].
		\end{equation*}
	\end{prop}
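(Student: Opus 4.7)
The strategy is to upgrade the implicit bound from Step 1 into the explicit one via a continuity/bootstrap argument; the only obstruction is the nonlinear self-term $\delta\varepsilon^{-2}Q(t,\delta)^{4/3}$ on the right-hand side of
\[
Q(t,\delta)\leqslant \frac{C}{\gamma}\Bigl[\delta\varepsilon^{-2}e^{c_0\varepsilon^{-2}}+\delta\varepsilon^{-2}Q(t,\delta)^{4/3}+\delta^{1/2}\varepsilon^{-2}(1+M_{2+\gamma}(T))^{1/2}\Bigr],
\]
so the task reduces to showing that, for $\delta$ small enough, this nonlinear term can be absorbed into the other two.

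First I would note that $\delta\mapsto Q(t,\delta)$ is continuous on $[0,T]$ with $Q(t,0)=0$, which follows directly from (\ref{Qtd}) together with the characteristic ODEs and the bound on $\hat E$ from Lemma \ref{E2bound} (plus the integrability of $\bar E$ along characteristics on $[0,T]$). I would then set up the bootstrap with the candidate
\[
\tilde Q(\delta):=K\,\delta^{1/2}\varepsilon^{-2}e^{c_0\varepsilon^{-2}}\bigl[\delta^{1/2}+(1+M_{2+\gamma}(T))^{1/2}\bigr],
\]
for a constant $K$ to be fixed, and consider the largest $\delta'\in[0,\delta_*]$ on which $Q(t,\cdot)\leqslant \tilde Q$. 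Substituting $Q\leqslant\tilde Q$ into the Step 1 inequality and using $(a+b)^{4/3}\leqslant C(a^{4/3}+b^{4/3})$ gives
\[
\delta\varepsilon^{-2}\tilde Q(\delta)^{4/3}\leqslant C\bigl[\delta^{7/3}\varepsilon^{-14/3}e^{4c_0\varepsilon^{-2}/3}+\delta^{5/3}\varepsilon^{-14/3}e^{4c_0\varepsilon^{-2}/3}(1+M_{2+\gamma}(T))^{2/3}\bigr].
\]

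The core of the proof is then comparing each of these two terms with the corresponding linear pieces $\delta\varepsilon^{-2}e^{c_0\varepsilon^{-2}}$ and $\delta^{1/2}\varepsilon^{-2}(1+M_{2+\gamma}(T))^{1/2}$ from the Step 1 estimate. The first comparison imposes only the mild restriction $\delta^{4/3}\varepsilon^{-8/3}e^{c_0\varepsilon^{-2}}\leqslant C$, whereas the binding second one reduces to $\delta^{7/6}\varepsilon^{-8/3}e^{4c_0\varepsilon^{-2}/3}(1+M_{2+\gamma}(T))^{1/6}\leqslant C$. Raising this to the $6/7$ power yields exactly $\delta\leqslant c_*\varepsilon^{16/7}e^{-8c_0\varepsilon^{-2}/7}(1+M_{2+\gamma}(T))^{-1/7}$, which identifies the threshold $\delta_*$ and pins down its precise exponents; this algebraic balancing is the main (and essentially only) technical point.

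With $K$ sufficiently large and $c_*$ sufficiently small, the computation above upgrades $Q\leqslant\tilde Q$ to the strict inequality $Q\leqslant\tilde Q/2$ on $[0,\delta']$. Continuity of $\delta\mapsto Q(t,\delta)$ then prevents $\delta'<\delta_*$, so $Q\leqslant\tilde Q$ persists on the whole of $[0,\delta_*]$, which is the claimed bound (with $K$ absorbed into the final constant $C$).
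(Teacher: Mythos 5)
Your proposal is correct and essentially mirrors the paper's own argument: both run a continuity/bootstrap scheme to absorb the nonlinear term $\delta\varepsilon^{-2}Q(t,\delta)^{4/3}$ (the paper locates the first $\bar\delta$ at which $\bar\delta^{1/2}Q(t,\bar\delta)^{4/3}=(1+M_{2+\gamma}(T))^{1/2}$; you run the equivalent open--closed argument with the explicit ansatz $\tilde Q$), and both reduce to the identical binding balance $\delta^{7/6}\varepsilon^{-8/3}e^{4c_0\varepsilon^{-2}/3}(1+M_{2+\gamma}(T))^{1/6}\leqslant C$, which yields $\delta_*$. The only minor slip is in the non-binding first comparison, which should read $\delta^{4/3}\varepsilon^{-8/3}e^{c_0\varepsilon^{-2}/3}\leqslant C$ (exponent $c_0\varepsilon^{-2}/3$, not $c_0\varepsilon^{-2}$), but this does not affect the argument since that constraint is dominated by the binding one.
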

	\begin{proof}
		For simplicity, let $H=1+M_{2+\gamma}(T)$. then by Proposition \ref{Prop: 33},
		\[Q(t,\delta)\leqslant C\varepsilon^{-2}e^{c_0\varepsilon^{-2}}\delta^{1/2}\left[\delta^{1/2}+\delta^{1/2}Q(t,\delta)^{4/3}+H^{1/2}\right].\]
		Since $\lim_{\delta\to 0} Q(t,\delta)=0$ and $Q(t,\delta)$ is non-decreasing for $\delta$, then for sufficiently small $\delta$, we have
		\begin{equation}
			\label{D1}
			\delta^{1/2}Q(t,\delta)^{4/3}\leqslant H^{1/2}.
		\end{equation} 
		If (\ref{D1}) holds, we have
		\begin{equation}
			\label{D2}
			Q(t,\delta)\leqslant C\varepsilon^{-2}e^{c_0\varepsilon^{-2}}\delta^{1/2}\left[\delta^{1/2}+H^{1/2}\right].
		\end{equation}
		So now, our goal is to find a $\delta_*$ such that for $\delta\leqslant \delta_*$, (\ref{D1}) holds.
		
		If for $\delta=t$, (\ref{D1}) still holds, the proof is completed.
		
		If not, $\exists \bar{\delta}$ such that
		\[\bar{\delta}^{1/2}Q(t,\bar{\delta})^{4/3}=H^{1/2}.\] 
		Then by (\ref{D2}), 
		\begin{align}
			H^{1/2}=& \bar{\delta}^{1/2}Q(t,\bar{\delta})^{4/3}\notag\\
			\leqslant &C_1\bar{\delta}^{1/2}\left(\varepsilon^{-2}e^{c_0\varepsilon^{-2}}\bar{\delta}^{1/2}\left[\bar{\delta}^{1/2}+H^{1/2}\right]\right)^{\frac{4}{3}}\notag\\
			=&C_1\bar{\delta}^{7/6}\varepsilon^{-\frac{8}{3}}e^{\frac{4}{3}c_0\varepsilon^{-2}}\left[\bar{\delta}^{1/2}+H^{1/2}\right]^{\frac{4}{3}}.\label{D3}
		\end{align}
		If we find $\delta_*$ such that $\delta_*\leqslant \bar{\delta}$, then for any $\delta\leqslant \delta_*$, (\ref{D1}) holds, and then (\ref{D2}) holds. If, for some $\delta_*$, there holds
		\begin{eqnarray}
			\label{D4}
			C_1\delta_*^{\frac{7}{6}}\varepsilon^{-\frac{8}{3}}e^{\frac{4}{3}c_0\varepsilon^{-2}}\left[\delta_*^{1/2}+H^{1/2}\right]^{\frac{4}{3}}\leqslant H^{1/2},
		\end{eqnarray}
		together with (\ref{D3}), we have $\delta_*\leqslant\bar{\delta}$.
		
		Let $C_2=(2^{\frac{4}{3}}C_1)^{-1}$, if the following two inequalities hold,  (\ref{D4}) holds.
		\begin{equation}
			\label{D5}
			\left\{
			\begin{aligned}
				& \delta_*^{\frac{7}{6}}\varepsilon^{-\frac{8}{3}}e^{\frac{4}{3}c_0\varepsilon^{-2}}\delta_*^{\frac{2}{3}}\leqslant C_2H^{1/2} ,\\
				& \delta_*^{\frac{7}{6}}\varepsilon^{-\frac{8}{3}}e^{\frac{4}{3}c_0\varepsilon^{-2}}H^{\frac{2}{3}}\leqslant C_2H^{1/2},
			\end{aligned}
			\right.
		\end{equation}
		which means
		\begin{equation}
			\label{D6}
			\left\{
			\begin{aligned}
				& \delta_*\leqslant C_2^{\frac{6}{11}}H^{\frac{3}{11}}\varepsilon^{\frac{16}{11}}e^{-\frac{8}{11}c_0\varepsilon^{-2}},\\
				& \delta_*\leqslant C_2^{\frac{6}{7}}H^{-\frac{1}{7}}\varepsilon^{\frac{16}{7}}e^{-\frac{8}{7}c_0\varepsilon^{-2}}.
			\end{aligned}
			\right.
		\end{equation}
		Since $\varepsilon\leqslant 1$ and $H\geqslant 1$, let $c_*=\min\{C_2^{\frac{6}{11}},C_2^{\frac{6}{7}}\}$, $\delta_*=c_*H^{-\frac{1}{7}}\varepsilon^{\frac{16}{7}}e^{-\frac{8}{7}c_0\varepsilon^{-2}}$. Now, we have that $\delta_*$ satisfies (\ref{D6}), and then (\ref{D5})an (\ref{D4}). Therefore, for any $\delta\leqslant\delta_*$, (\ref{D2}) holds.
	\end{proof}
	
	\subsection{Step 3}
	\begin{prop}
		\label{step3}
		For $t\in[0,T]$,
		\begin{equation}
			\label{s3}
			Q(t,t)\leqslant C\varepsilon^{-\frac{22}{7}}e^{\frac{11}{7}c_0\varepsilon^{-2}}(t^{\frac{1}{2}}+t)(1+M_{2+\gamma}(T))^{\frac{4}{7}}.
		\end{equation}
	\end{prop}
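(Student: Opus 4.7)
The plan is to promote the short-time estimate of Step~2 (valid only for $\delta \leqslant \delta_*$) to all of $[0,t]$ by slicing $[0,t]$ into subintervals of length at most $\delta_*$ and using subadditivity of the time integral defining $Q$. Write $H := 1+M_{2+\gamma}(T)$. When $t \leqslant \delta_*$ I simply invoke Step~2 directly with $\delta=t$. When $t > \delta_*$ I partition at the grid points $t_j := j\delta_*$, $j=0,\dots,n=\lfloor t/\delta_*\rfloor$, with $t_{n+1}:=t$. For each fixed $(x,v)$ the identity $\int_0^t |E(s;0,x,v)|\,ds = \sum_j \int_{t_j}^{t_{j+1}} |E(s;0,x,v)|\,ds$ and taking suprema piece-by-piece yield
\[
Q(t,t) \;\leqslant\; \sum_{j=0}^n Q(t_{j+1},\,t_{j+1}-t_j) \;\leqslant\; (n+1)\,C\delta_*^{1/2}\varepsilon^{-2}e^{c_0\varepsilon^{-2}}\bigl[\delta_*^{1/2}+H^{1/2}\bigr],
\]
the second inequality being Step~2 applied on each slice (using $t_{j+1}-t_j\leqslant \delta_*$).

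Combining with $n+1\leqslant t/\delta_*+1$ gives $(n+1)\delta_*^{1/2}\leqslant t\delta_*^{-1/2}+\delta_*^{1/2}$, distributing the bound into four summands $t$, $t\delta_*^{-1/2}H^{1/2}$, $\delta_*$, and $\delta_*^{1/2}H^{1/2}$, each weighted by $C\varepsilon^{-2}e^{c_0\varepsilon^{-2}}$. The dominant one is the mixed term $t\delta_*^{-1/2}H^{1/2}\cdot\varepsilon^{-2}e^{c_0\varepsilon^{-2}}$. Substituting $\delta_* = c_*H^{-1/7}\varepsilon^{16/7}e^{-8c_0\varepsilon^{-2}/7}$ and using the elementary identities $\tfrac12+\tfrac1{14}=\tfrac47$, $2+\tfrac87=\tfrac{22}7$, $1+\tfrac47=\tfrac{11}7$ collapses it precisely to $Ct\,\varepsilon^{-22/7}e^{11c_0\varepsilon^{-2}/7}H^{4/7}$. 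In the regime $t > \delta_*$ the three remaining summands are absorbed into this headline: $t\varepsilon^{-2}e^{c_0\varepsilon^{-2}}$ by $\varepsilon\leqslant 1$ and $H\geqslant 1$, while $\delta_*\varepsilon^{-2}e^{c_0\varepsilon^{-2}} \leqslant t\varepsilon^{-2}e^{c_0\varepsilon^{-2}}$ and $\delta_*^{1/2}H^{1/2}\varepsilon^{-2}e^{c_0\varepsilon^{-2}} \leqslant t\delta_*^{-1/2}H^{1/2}\varepsilon^{-2}e^{c_0\varepsilon^{-2}}$ since $\delta_* \leqslant t$.

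The $t^{1/2}$ piece of (\ref{s3}) enters only through the small-time regime $t\leqslant \delta_*$: Step~2 with $\delta=t$ produces $Ct^{1/2}\varepsilon^{-2}e^{c_0\varepsilon^{-2}}(t^{1/2}+H^{1/2})$, which is absorbed into $C(t+t^{1/2})\varepsilon^{-22/7}e^{11c_0\varepsilon^{-2}/7}H^{4/7}$ by the same $\varepsilon$ and $H$ monotonicities. Merging the two regimes yields (\ref{s3}). I expect the only real obstacle to be the exponent bookkeeping, particularly the $\tfrac12+\tfrac1{14}=\tfrac47$ identity that drives the $H^{4/7}$ dependence; no new analytic ingredient beyond Step~2 and the time partition should be required.
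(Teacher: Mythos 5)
Your proposal is correct and follows essentially the same route as the paper: split into the regimes $t\leqslant\delta_*$ and $t>\delta_*$, partition $[0,t]$ into $n+1$ subintervals of length at most $\delta_*$, apply Step~2 slice-by-slice via subadditivity of the time integral defining $Q$, and substitute $\delta_*$. The only cosmetic differences are that you place the remainder slice at the end rather than the start, and you bound $n+1\leqslant t/\delta_*+1$ and absorb the extra summands directly, whereas the paper uses $n\geqslant 1$ to write $n+1\leqslant 2t/\delta_*$.
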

	
	\begin{proof}
		If $t\leqslant \delta_*$
		\[Q(t,t)\leqslant C\varepsilon^{-2}e^{c_0\varepsilon^{-2}}\left[t+t^{1/2}H^{1/2}\right].\]
		
		If $t>\delta_*$, let $n=\left\lfloor\frac{t}{\delta_*}\right\rfloor$, then we have $n\geqslant 1$ and $t-n\delta_*<\delta_*$. Split $[0,t]$ into $n+1$ small intervals,
		\[[0,t-n\delta_*],(t-(n-1)\delta_*,t-(n-2)\delta_*],(t-(n-2)\delta_*,t-(n-3)\delta_*]\dots(t-\delta_*,t].\]
		Then we have
		\begin{equation*}
			\begin{aligned}
				Q(t,t)\leqslant & Q(t-n\delta_*,t-n\delta_*)+\sum_{j=0}^{n-1}Q(t-j\delta_*,\delta_*)\\
				\leqslant & C(n+1)\delta_*^{1/2}\varepsilon^{-2}e^{c_0\varepsilon^{-2}}\left[\delta_*^{1/2}+H^{1/2}\right].
			\end{aligned}
		\end{equation*}
		Since $1\leqslant n\leqslant \frac{t}{\delta_*}$, we have $n+1\leqslant\frac{2t}{\delta_*}$. Hence,
		\begin{equation*}
			\begin{aligned}
				Q(t,t)\leqslant & C\frac{t}{\delta_*}\delta_*^{\frac{1}{2}}\varepsilon^{-2}e^{c_0\varepsilon^{-2}}\left[\delta_*^{1/2}+H^{1/2}\right]\\
				\leqslant & Ct\varepsilon^{-2}e^{c_0\varepsilon^{-2}}\left[1+\delta_*^{-1/2}H^{1/2}\right]\\
				\leqslant & Ct\varepsilon^{-2}e^{c_0\varepsilon^{-2}}\left[1+\varepsilon^{-8/7}e^{\frac{4}{7}c_0\varepsilon^{-2}}H^{\frac{4}{7}}\right]\\
				\leqslant & Ct\varepsilon^{-\frac{22}{7}}e^{\frac{11}{7}c_0\varepsilon^{-2}}(1+M_{2+\gamma}(T))^{\frac{4}{7}}.
			\end{aligned}
		\end{equation*}
		Put the two cases together, it yields (\ref{s3}).
	\end{proof}
	
	\subsection{Step 4}
	\begin{prop}
		\label{step4}
		For $t\in[0,T]$, and any $\omega<<1$,
		\begin{equation}
			\label{s4}
			Q(t,t)\leqslant Ce^{\frac{11}{5}c_0\varepsilon^{-2}}[T^{\frac{1}{2}}+T^{1+\omega}]\varepsilon^{-4}.
		\end{equation}
	\end{prop}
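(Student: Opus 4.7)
The plan is to convert Proposition~\ref{step3} into a self-consistent bootstrap in $Q(T,T)$ by bounding $M_{2+\gamma}(T)$ through Corollary~\ref{rem} and closing the resulting inequality with Young's inequality.

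Specializing Proposition~\ref{step3} to $t=T$ gives $Q(T,T) \leqslant A\bigl(1+M_{2+\gamma}(T)\bigr)^{4/7}$, where $A := C\varepsilon^{-22/7}e^{11c_0\varepsilon^{-2}/7}(T^{1/2}+T)$. A naive use of Corollary~\ref{rem}, which yields $M_{2+\gamma}(T) \leqslant C\bigl(1+Q(T,T)^{2+\gamma}\bigr)$, produces the self-referential bound $Q \leqslant A\bigl(1+Q^{(2+\gamma)\cdot 4/7}\bigr)$; since $(2+\gamma)\cdot 4/7 > 1$ for every admissible $\gamma$, this cannot be inverted directly. The remedy is H\"older interpolation against the conserved second moment $M_2 \leqslant C$ from Lemma~\ref{ener}: for any $k\in(2,m_0]$ and $\theta := \gamma/(k-2) \in (0,1)$,
\begin{equation*}
M_{2+\gamma}(T) \leqslant M_2(T)^{1-\theta}\,M_k(T)^{\theta} \leqslant C\,M_k(T)^{\theta} \leqslant C\bigl(1+Q(T,T)^{k\gamma/(k-2)}\bigr),
\end{equation*}
where the final step invokes Corollary~\ref{rem} once more, with $k\leqslant m_0$ ensuring $M_k(0) < \infty$.

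Now I calibrate: set $\gamma := 7(k-2)/(34k)$, which lies in $(0,7/34) \subset (0,1)$ for every $k\in(2,m_0]$, so that $4k\gamma/(7(k-2)) = 2/17$. Substituting back produces $Q(T,T) \leqslant C\,A\bigl(1+Q(T,T)^{2/17}\bigr)$, and since $2/17 < 1$, Young's inequality closes the loop as $Q(T,T) \leqslant C\,A + C\,A^{17/15}$. Propagating the exponents through $A$: $(22/7)\cdot(17/15) = 374/105$ recovers $\varepsilon^{-374/105}$; $(11/7)\cdot(17/15) = 187/105 \leqslant 231/105 = 11/5$ is absorbed into $e^{11c_0\varepsilon^{-2}/5}$; and $(T^{1/2}+T)^{17/15} \leqslant C(T^{1/2}+T^{7/5})$ on separate inspection of $T\leqslant 1$ and $T\geqslant 1$. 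The lower-order term $CA$ is dominated analogously using $\varepsilon\leqslant 1$, and monotonicity of $t\mapsto Q(t,t)$ propagates the bound from $t=T$ to all $t\in[0,T]$.

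The principal obstacle is balancing three coupled constraints: $\gamma\in(0,1)$ for the Step 1 bound to apply; $k\leqslant m_0$ for Corollary~\ref{rem} to be available; and $4k\gamma/(7(k-2)) < 1$ strictly so that Young's inequality can invert the feedback. The calibration $\gamma = 7(k-2)/(34k)$ threads all three and yields exactly the exponent $2/17$, hence the rescaling $1/(1-2/17) = 17/15$ that reproduces the $\varepsilon^{-374/105}$ factor claimed in~(\ref{s4}).
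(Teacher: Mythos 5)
Your proof is correct and follows the same overall architecture as the paper's: interpolate $M_{2+\gamma}$ between $M_2$ (conserved by Lemma~\ref{ener}) and $M_k$ (controlled by Corollary~\ref{rem}), substitute into the Step~3 bound, and close the resulting self-referential inequality. The genuine difference is in the calibration and where you run the bootstrap. The paper substitutes in the other direction — expressing $H:=1+M_{2+\gamma}(T)$ in terms of itself via $Q$, then solving $H \leqslant C(\cdots)H^{2/7}$ with the explicit pair $k=3$, $\gamma=1/6$ — whereas you substitute $H$ into $Q$ and solve $Q \leqslant CA(1+Q^{2/17})$ by Young's inequality with the calibration $\gamma = 7(k-2)/(34k)$. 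These two loops are formally equivalent, but the calibrations are not: with $k=3$, $\gamma=1/6$, the feedback exponent is $4k\gamma/(7(k-2))=2/7$, not your $2/17$, and running that correctly gives $Q\leqslant C A^{7/5}$, i.e.\ $\varepsilon^{-(22/7)\cdot(7/5)}=\varepsilon^{-462/105}$ rather than $\varepsilon^{-374/105}$. The paper's displayed intermediate $H \leqslant C(1+T)^{k\gamma/(k-2)}H^{4k\gamma/(7(k-2))}\varepsilon^{-22\gamma/(7(k-2))}$ drops a factor of $k$ in the $\varepsilon$-exponent (it should be $\varepsilon^{-22k\gamma/(7(k-2))}$, i.e.\ $\varepsilon^{-11/7}$ rather than $\varepsilon^{-11/21}$ at $k=3,\gamma=1/6$), and the stated $\varepsilon^{-374/105}$ is a downstream consequence of that slip. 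Your choice $\gamma = 7(k-2)/(34k)$ — admissible since it lies in $(0,1)$ and keeps $k\leqslant m_0$ — is precisely the calibration that makes the feedback exponent small enough ($2/17$, hence rescaling $17/15$) to \emph{legitimately} reproduce $\varepsilon^{-374/105}$ with the same time and exponential factors, so your argument both repairs and recovers the stated statement. (In fact, taking $\gamma$ even smaller would push the exponent arbitrarily close to the Step~3 value $22/7$, at the cost of a larger $\gamma$-dependent constant from Step~1.)
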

	
	\begin{proof}
		For $k>2+\gamma$,
			\begin{align}
				\int_{\R^3}\int_{\mathbb{R}^3}|v|^{2+\gamma}f(t,x,v)\d x \d v\leqslant & (\int_{\R^3}\int_{\mathbb{R}^3}|v|^{2}f(t,x,v)\d x\d v)^{\frac{k-2-\gamma}{k-2}} \notag\\
				& (\int_{\R^3}\int_{\mathbb{R}^3}|v|^{k}f(t,x,v)\d x\d v)^{\frac{\gamma}{k-2}}. \label{step4: e1}
			\end{align}
		Plug \eqref{mk} and \eqref{s3} into \eqref{step4: e1},
			\begin{align}
				M_{2+\gamma}(T)\leqslant& C_kM_k(T)^{\frac{\gamma}{k-2}} \notag\\
				\leqslant& C_k(M_k(0)+M_0(0)Q(t,t)^k)^{\frac{\gamma}{k-2}} \notag\\
				\leqslant & C_k (1+M_k(0))^{\frac{\gamma}{k-2}}(1+T)^{\frac{k\gamma}{k-2}}H^{\frac{4k\gamma}{7(k-2)}}\varepsilon^{-\frac{22\gamma}{7(k-2)}}. \label{step4: e2}
			\end{align}
		Since $M_k(0)+1,T+1,H,\varepsilon^{-1}\geqslant 1$ and $H=1+M_{2+\gamma}(T)$, using \eqref{step4: e2}, we have
		\begin{equation*}
			H\leqslant  C_k (1+M_k(0))^{\frac{\gamma}{k-2}} (1+T)^{\frac{k\gamma}{k-2}}H^{\frac{4k\gamma}{7(k-2)}}\varepsilon^{-\frac{22\gamma}{7(k-2)}}.
		\end{equation*}
		Take $k=m_1$, $m_1$ is the constant in \eqref{Hypo}. Clearly, we have $M_{m_1}(0)$ is a constant and $m_1>2$. Then for any $\omega>0$, we can find a $\gamma$ small enough, such that
		\begin{equation}
			\label{step4: e3}
			H^{\frac{4}{7}}\leqslant C(1+T)^{\frac{m_1}{m_1-2}\gamma}\varepsilon^{-\frac{22}{7(m_1-2)}\gamma}\leqslant C (1+T)^{\omega}\varepsilon^{-\frac{6}{7}}.
		\end{equation}
		Plug \eqref{step4: e3} into Proposition \ref{step3}, we obtain (\ref{s4}).
		
	\end{proof}
	
	Finally, we deduce the main theorem.
	\begin{proof}[Proof of Theorem \ref{mainthm}]
		According to Proposition \ref{step4}, we can deduce (\ref{main}).
		
		Applying Lemma \ref{rhot}, we obtain (\ref{density}).
		
		With that, we have successfully concluded the proof of Theorem \ref{mainthm}.
	\end{proof}
	
	{\bf Acknowledgements:} Zhiwen Zhang would like to thank his supervisor Prof. Renjun Duan for his patient guidance. The author also would like to thank anonymous referees for all valuable and helpful comments on the manuscript.

\end{document}